\newtheorem{theorem}{Theorem}[section]
\newtheorem{proposition}[theorem]{Proposition}
\newtheorem{definition}[theorem]{Definition}
\newtheorem{remark}[theorem]{Remark}
\newtheorem{lemma}[theorem]{Lemma}
\newtheorem{example}[theorem]{Example}
\newenvironment{proof}{\textbf{Proof. }}{\QED\bigskip}
\newtheorem{assumption}[theorem]{Assumption}
\def\xx{{\boldsymbol x}}
\newcommand{\BEAS}{\begin{eqnarray*}}
\newcommand{\EEAS}{\end{eqnarray*}}
\newcommand{\BEA}{\begin{eqnarray}}
\newcommand{\EEA}{\end{eqnarray}}
\newcommand{\BEQ}{\begin{equation}}
\newcommand{\EEQ}{\end{equation}}
\newcommand{\BIT}{\begin{itemize}}
\newcommand{\EIT}{\end{itemize}}
\newcommand{\BNUM}{\begin{enumerate}}
\newcommand{\ENUM}{\end{enumerate}}
\newcommand{\BA}{\begin{array}}
\newcommand{\EA}{\end{array}}
\newcommand{\QED}{~~\rule[-1pt]{6pt}{6pt}}
\newcommand{\argmin}{\mathop{\rm argmin}}
\newcommand{\argmax}{\mathop{\rm argmax}}
\newcommand{\cmark}{\ding{51}}%
\newcommand{\xmark}{\ding{55}}%
\definecolor{darkorange}{rgb}{0.85, 0.45, 0}
\definecolor{mydarkblue}{rgb}{0,0.08,0.45}
\def\xx{{\boldsymbol x}}
\def\defas{\stackrel{\text{def}}{=}}
\definecolor{myblue}{HTML}{D2E4FC}
\definecolor{Gray}{gray}{0.92}
\newcommand{\cC}{\mathcal{C}}
\def\1{\bm{1}}
\DeclareMathAlphabet{\mathsfit}{\encodingdefault}{\sfdefault}{m}{sl}
\SetMathAlphabet{\mathsfit}{bold}{\encodingdefault}{\sfdefault}{bx}{n}
\newcommand{\citet}[1]{\textcite{#1}}
\newcommand{\citep}[1]{\parencite{#1}}
\newcommand{\remove}[1]{}
\newrobustcmd*{\parentexttrack}[1]{%
  \begingroup
  \blx@blxinit
  \blx@setsfcodes
  \blx@bibopenparen#1\blx@bibcloseparen
  \endgroup}
\begin{document}

%

%
\runningauthor{Thomas Kerdreux, Lewis Liu, Simon Lacoste-Julien, Damien Scieur}

\twocolumn[ 
\aistatstitle{Affine Invariant Analysis of Frank-Wolfe on Strongly Convex Sets}
\aistatsauthor{ Thomas Kerdreux$^{1,*}$ \And Lewis Liu$^{2,*}$ \And  Simon Lacoste-Julien$^{2,3,\dagger}$ \And Damien Scieur$^{3,*}$}
\aistatsaddress{} 
] 

\begin{abstract}
    It is known that the Frank-Wolfe (FW) algorithm, which is affine-covariant, enjoys accelerated convergence rates when the constraint set is strongly convex. However, these results rely on norm-dependent assumptions, usually incurring non-affine invariant bounds, in contradiction with FW's affine-covariant property. In this work, we introduce new structural assumptions on the problem (such as the directional smoothness) and derive an affine invariant, norm-independent analysis of Frank-Wolfe. Based on our analysis, we propose an affine invariant backtracking line-search. Interestingly, we show that typical backtracking line-searches using smoothness of the objective function surprisingly converge to an affine invariant step size, despite using affine-dependent norms in the step size's computation. This indicates that we do not necessarily need to know the set's structure in advance to enjoy the affine-invariant accelerated rate.
\end{abstract}

\section{Introduction}
Conditional Gradient algorithms, a.k.a.\ Frank-Wolfe (FW) algorithms~\citep{frank1956algorithm}, form a class of first-order methods solving constrained optimization problems such as
\begin{equation}\label{eq:opt_problem}
    \underset{x\in\mathcal{C}}{\text{min }} f(x).
\end{equation}
The schemes in this class decompose non-linear constrained problems into a series of linear problems on the original constraint set, \textit{i.e.}\ linear minimization oracles (LMO). They form a practical family of algorithms~\citep{jaggi2013revisiting, bojanowski2014weakly,alayrac2016unsupervised,seguin2016instance,peyre2017weakly,miech2018learning,lacoste2015sequential,courty2016optimal,paty2019subspace,luise2019sinkhorn}; however, many open questions remain in designing such optimal algorithmic schemes (\textit{e.g.\ } \citep{braun2016lazifying,kerdreux2018frank,braun2018blended,combettes2020boosting,carderera2020second,Mortagy20,Combettes20}) and in their theoretical understanding.

\begin{algorithm}[t]
  \caption{Frank-Wolfe Algorithm}\label{algo:FW_general}
  \begin{algorithmic}[1]
    \REQUIRE $x_0 \in \mathcal{C}$.
    \FOR{$k=0, 1, \ldots, K $}
    	\STATE 
    	$v_k \in \underset{v\in\mathcal{C}}{\text{argmax }} \langle -\nabla f(x_k), \, v - x_k\rangle \hfill \vartriangleright\text{LMO}$ \label{line:FW_LMO}
    	
        \STATE 
        $\gamma_k = \underset{\gamma\in[0,1]}{\text{argmin }} f(x_k + \gamma (v_k - x_k)) \hfill \vartriangleright\text{Line-search}$ \label{line:step_size}
        
        \STATE 
        $x_{k+1} = (1 - \gamma_t)x_{k} + \gamma_k v_{k} \hfill\vartriangleright\text{Convex update}$ \label{line:FW_update} 
    \ENDFOR
  \end{algorithmic}
\end{algorithm}

Besides, with the appropriate line-search, the iterates of the FW are \textit{affine covariant} under the affine transformation $y = Bx+b$ of problem \eqref{eq:opt_problem},
\begin{equation} \label{eq:opt_problem_affine_transformation}
    \min_{y \in \tilde{\mathcal{C}} = B^{-1} (\mathcal{C}-b)} \hspace{-4ex} \tilde f(y) \defas f(B^{-1}(y-b)), \quad B \text{ invertible.}
\end{equation}
\begin{definition} 
An algorithm is affine covariant when its iterates $(x_k)$ (resp. $(y_k)$) for problem \eqref{eq:opt_problem} (resp. \eqref{eq:opt_problem_affine_transformation}) satisfy
\[
    y_k = Bx_k+b.
\]
\end{definition}
In other words, the behavior of Algorithm~\ref{algo:FW_general} is insensitive to affine transformations or re-parametrization of the space. This means that, ideally, the theoretical rate for a affine covariant algorithm should be \textit{affine invariant}.

The original Frank-Wolfe algorithm (Algorithm \ref{algo:FW_general}) generally enjoy a slow sublinear rate $\mathcal{O}(1/K)$ over general compact convex set and smooth convex functions~\citep{jaggi2013revisiting}. In that setting, \cite{Clar10,jaggi2013revisiting} define a modulus of smoothness that leads to affine invariant analysis of the Frank-Wolfe algorithm, matching with the affine covariant behavior of the algorithm.

Many works have then sought to find structural assumptions and algorithmic modifications that accelerate this sublinear rate of $\mathcal{O}(1/K)$. The strong convexity of the set (or more generally uniform convexity, see \citep{kerdreux2020uniform}) is one of such structural assumptions which lead to various accelerated convergence rates, like linear convergence rates when the unconstrained optimum is outside the constraint set \citep{levitin1966constrained,demyanov1970,dunn1979rates,rector2019revisiting} or sublinear rates $\mathcal{O}(1/K^2)$ when the function is also strongly convex but without restrictions on the position of the optimum \citep{garber2015faster}. However, to the best of our knowledge, there exists no affine invariant analysis for these accelerated regimes
stemming from the strong convexity of the constraint set $\mathcal{C}$.

\begin{table*}[t]
    \centering
        \begin{tabular}{lcccccc} 
        \toprule
        Related Work & $\mathcal{C}$ & \hspace{-3ex}Str. cvx. $f$ \hspace{-4ex}~& ~$x^*$ & Algo & Step size & Rate \\
        \midrule
        \citet{Clar10}              & Simplex       & \xmark & Any                          & FW         & Scheduled        & $\mathcal{O}(1/K)$\\
        \citet{jaggi2013revisiting} & Convex   & \xmark & Any                          & FW         & Scheduled        & $\mathcal{O}(1/K)$\\
         \citet{lacoste2013affine} & Any           & \cmark & Interior                     & FW         & Exact ls         & Linear\\
        \begin{minipage}[c]{0.53\columnwidth}
         \citet{lacoste2015global} \\
         \citet{gutman2020condition}
         \end{minipage} 
         & Polytope      & \cmark & Any                          & Corr. FW   & Exact ls         & Linear\\
        \textcolor{darkorange}{\textbf{Our work}}           & \textcolor{darkorange}{Strongly cvx}  & \textcolor{darkorange}{\xmark} & \textcolor{darkorange}{$\nabla f(x^{\star})\neq 0$}  & \textcolor{darkorange}{FW}         & \textcolor{darkorange}{Backtracking ls}  & \textcolor{darkorange}{Linear} \\
                                    & \textcolor{darkorange}{Strongly cvx}  & \textcolor{darkorange}{\cmark} & \textcolor{darkorange}{Any}                          & \textcolor{darkorange}{FW}         & \textcolor{darkorange}{Backtracking ls}  & \textcolor{darkorange}{$\mathcal{O}(1/K^2)$} \\
        \bottomrule
        \end{tabular}
    \caption{Existing \textit{affine invariant} analysis of Frank-Wolfe for smooth convex functions under different schemes. 
    \newline
    \textbf{Strong convexity. } The strong convexity assumption is to be taken in a broad sense. In~\citep{lacoste2013affine,lacoste2015global}, the authors consider ``generalized geometric strong convexity'' (see their Eq.~39), an affine invariant measure of (generalized) strong convexity, while \citep{gutman2020condition} consider strongly convex functions relative to a pair $(\mathcal{C},\,\omega)$ where $\omega$ is a distance-like function. In our work, we do not directly assume strong convexity, but the \textit{directional smoothness} of the function (see later Definition~\ref{def:directionnal_smoothness}), whose constant is bounded if various assumptions are satisfied for problem~\eqref{eq:opt_problem} (Theorem \ref{thm:directionnal_smoothness_bound}). \newline
    \textbf{Step size. } By \emph{scheduled} step sizes, 
    we consider, for instance, the classical $\gamma_k = \frac{2}{k+2}$.
    We denote by \emph{exact-line search} when the optimal step size depends on an unknown affine invariant quantity, whose accessible upper-bounds are affine-dependent (thus breaking the affine invariance of FW).
    }
    \label{tab:affine_invariant_analyses}
\end{table*}

In these ``non affine invariant'' analyses, structural assumptions like the $L$-smoothness (Definition \ref{def:smoothness_fun}) of $f$ and the $\alpha$-strong convexity of $\mathcal{C}$ (Definition \ref{def:strong_convexity_set}) lead to accelerated convergence rate of the Frank-Wolfe algorithm, but are typically conditioned on parameters $L,\alpha$ and others, which depend on a particular choice of a norm. This is surprising given that the Frank-Wolfe algorithm (under appropriate line-search) does not depend on any norm choice. 

Recall that the smoothness of a function and the strong convexity of a set are defined as follows.
\begin{definition}\label{def:smoothness_fun}
The function $f$ is \textbf{smooth} over the set $\mathcal{C}$ w.r.t.\ the norm $\|\cdot\|$ if there exists a constant $L>0$ such that, for any $x\,,y \in\mathcal{C}$, we have
\begin{equation}\label{eq:smoothness_fun}
    f(y)\leq f(x) + \langle \nabla f(x), \, y-x\rangle + \frac{L}{2}\|x-y\|^2.
\end{equation}
\end{definition}
\begin{definition}\label{def:strong_convexity_set}
A set $\mathcal{C}$ is \textbf{$\alpha$-strongly convex} with respect to a norm $\|\cdot\|$ if, for any $(x,y)\in\mathcal{C}$, $\gamma\in[0,1]$ and $\|z\|\leq 1$, we have
\begin{equation}\label{eq:def_strong_convexity_set}
    \gamma x + (1-\gamma)y + \alpha\gamma(1-\gamma) \|x-y\|^2 z \in\mathcal{C}.
\end{equation}
\end{definition}

Obtaining \textit{practical} accelerated affine invariant rates is hard, as an affine invariant step size is required. Indeed, some adaptive step sizes rely on theoretical affine invariant quantities which are in general not accessible. Therefore, by practical, we consider rates that can be achieved without a deep knowledge of the problem structure and constants.

For instance, scheduled step sizes, \textit{e.g.\ }$\gamma_k=\frac{2}{k+2}$, makes the Frank-Wolfe algorithm practically affine covariant, yet they do not capture the accelerated convergence regimes.
Exact line-search guarantees a practically affine covariant algorithm while capturing accelerated convergence regimes but significantly increases the time to perform a single iteration. 
Finally, it is possible to  use backtracking line-search such as \citep{pedregosa2020linearly}. Unfortunately, backtracking techniques rely on the choice of a specific norm, thus breaking affine invariance of the algorithm. 
\begin{minipage}{\linewidth}
\vspace{0.7mm}
This raises naturally the following questions:
\begin{framed}
\begin{center}
    \textit{Can we derive affine invariant rates for the Frank-Wolfe algorithm on strongly convex sets?}
\end{center}
\end{framed}
\begin{framed}
\begin{center}
    \textit{Can we design an affine invariant backtracking line-search for Frank-Wolfe algorithms?}
\end{center}
\end{framed}
This work provides a positive answer to these questions, by proposing the following contributions.
\end{minipage}

\paragraph{Contributions.} In this paper, \textcolor{darkorange}{\textbf{1)}} we conduct affine invariant analyses of the Frank-Wolfe Algorithm \ref{algo:FW_general}, when the function $f$ is smooth w.r.t.\ to a specific distance function $\omega(\cdot)$ and the set $\mathcal{C}$ is strongly convex also w.r.t.\ $\omega(\cdot)$. We then introduce new structural assumptions extending the class of problems for which such accelerated regimes hold in the case of Frank-Wolfe, called \textit{directionally smooth functions with direction $\delta$}. From this definition, \textcolor{darkorange}{\textbf{2)}} we propose an affine invariant backtracking line-search for finding the optimal step size. Finally, \textcolor{darkorange}{\textbf{3)}} we show that existing backtracking line-search methods, which use a specific norm, converges surprisingly to the optimal norm-invariant, affine invariant step size, meaning that affine-dependent and affine invariant backtracking techniques perform similarly.

\paragraph{Outline.} In Section \ref{sec:affine_dependent_analysis_FW}, we motivate the need for affine invariant analysis of Frank-Wolfe on strongly convex sets. 
In Section~\ref{sec:assumptions} and~\ref{sec:directional_smoothness}, we introduce the structural assumptions on the optimization problem that we will consider for analysing Frank-Wolfe. In Section \ref{sec:affine_invariant_analysis} we detail our affine invariant analysis of Frank-Wolfe on strongly convex set. In Section~\ref{sec:practical_affine_invariant_backtracking} and \ref{sec:explanation_efficiency_ls} we provide a backtracking line-search that directly estimate the affine invariant quantities we developed and we explain how it relates with existing ones. We conclude in Section \ref{sec:experiments} with numerical experiments.

\paragraph{Related Work.}
Other linear convergence rates of Frank-Wolfe algorithms exists with affine invariant analysis. For instance, corrective variants of Frank-Wolfe exhibit (affine invariant) linear convergence rates when the constraint set is a polytope \citep{lacoste2013affine,lacoste2015global} 
and the objective function is (generally) strongly convex. See Table \ref{tab:affine_invariant_analyses} for a review of all affine invariant analyses of Frank-Wolfe algorithms.

These affine invariant analyses emphasize that there is no specific choice of norm to be made in Frank-Wolfe algorithms as well as there is no need for affine pre-conditionners. Frank-Wolfe algorithms are arguably \textit{free-of-choice} methods, \textit{i.e.} little needs to be known on the optimization problem's structures to obtain the accelerated regimes. 
This is in line with recent works showing that the Frank-Wolfe methods exhibit accelerated adaptive behavior under a variety of structural constraints of \eqref{eq:opt_problem} which depend on inaccessible parameters, \textit{e.g.\ }H\"olderian Error Bounds on $f$ \citep{kerdreux2018restarting,YiAdapativeFW,Rinaldi2020} or local uniform convexity of $\mathcal{C}$ \citep{kerdreux2020uniform}.

Affine invariant analyses introduce constants seeking to characterize structural properties without a specific choice of norm. This has then been the basis for works extending the accelerated convergence analysis to non-smooth or non-strongly convex functions \citep{pena2019generalized,gutman2020condition}, which then explore new structural assumptions on $f$.

\section{``Affine-dependent'' Analysis of FW}\label{sec:affine_dependent_analysis_FW}

It is known that when the function is \textit{smooth} (Definition \ref{def:smoothness_fun}), the set is \textit{strongly-convex} (Definition \ref{def:strong_convexity_set}) and the gradient is lower bounded $\|\nabla f(x)\|\geq c$ over the constraint set (i.e., the constraints are active), the Frank-Wolfe algorithm \ref{algo:FW_general} converges linearly \citep{levitin1966constrained,demyanov1970,dunn1979rates}, at rate
\begin{equation} \label{eq:linear_conv_afine_dep}
    f(x_k)-f_{\star} \leq \; \left(\textstyle 1-\frac{L}{2c\alpha}\right)^k \left(f(x_0)-f_{\star}\right).
\end{equation}
Note that assuming the gradient to be lower bounded means the constraints are tight, i.e., the solution of the unconstrained counterpart lies outside the set of constraints. However, the constants $L$, $\alpha$, and $c$ depend on the choice of the norm for the smoothness and the strong convexity. In contrast, the Frank-Wolfe algorithm and iterates do not depend on such a choice, due to its affine covariance. Therefore, the rate of Algorithm \ref{algo:FW_general} should be affine invariant.
Unfortunately, it is possible to show that the known theoretical analyses can be \textit{arbitrarily} bad in the case where the constants $L,\,c,\,\alpha$ depend on ``affine variant'' norms.

\begin{example} \label{eq:example_not_affine_invariant}
    Consider the projection problem
    \[
        \textstyle \min_x f(x)\defas \frac{1}{2}\|x-\bar{x}\|^2 \quad \text{such that}\;\; \frac{1}{2}\|x\|^2 \leq 1.
    \]
    In such case, we have that $L=1,\;\alpha = \frac{1}{\sqrt{2}}$ and $c = 1-\|\bar{x}\|$ ($L,\,\alpha$ and $c$ are defined according to the $\ell_2$ norm). However, if we transform the problem into $\min_y f(By)$, the new constants become
    \[
        \textstyle L = \sigma_{\max}(B),\;\; \alpha = \frac{\sigma_{\min}(B)}{\sqrt{2}\sigma_{\max}(B)},\;\; c = \sigma_{\max}(B)(1-\|x_0\|).
    \]
    Comparing the rate \eqref{eq:linear_conv_afine_dep} of the two problems, identical to the eyes of the FW algorithm, we have that
    \begin{align*}
        f(x_k)-f^{\star} & \textstyle \leq \left( 1-\frac{1}{\sqrt{2}(1-\|\bar{x}\|)} \right)^k \big(f(x_0)-f^{\star}\big), \\
        f(By_k)-f^{\star} & \textstyle \leq \left( 1-\frac{\kappa^{-2}(B)}{\sqrt{2}(1-\|\bar{x}\|)} \right)^k\big(f(x_0)-f^{\star}\big),
    \end{align*}
    where $\kappa(B) = \frac{\sigma_{\max}(B)}{\sigma_{\min}(B)}$ is the condition number of $B$. This means we can artificially make a large theoretical upper bound on the rate of convergence by using an ill-conditioned transformation (i.e., $\kappa(B)$ large). However, the speed of convergence of FW iterates are \textit{not affected} by any linear transformation (dues to their affine-covariance), therefore the upper bound will not be representative of the true rate of convergence of FW.
\end{example}



When the optimum is in the relative interior of any compact set $\mathcal{C}$, FW converges linearly when $f$ is strongly convex~\citep{guelat1986some,lacoste2013affine}. On the other hand, linear convergence on strongly convex sets does not require strong convexity of $f$ when the solution of the unconstrained problem lies outside the set \citep{demyanov1970}. Our paper hence focuses on extending the analysis where the unconstrained optimum is outside the constrain set \citep{demyanov1970}.


These two analysis cover most practical cases, but not the situation where the unconstrained optimum is close to the boundary of $\mathcal{C}$. A recent analysis on strongly convex sets of \citep{garber2015faster} is not restrictive w.r.t.\ the position of the unconstrained optimum but conservative (convergence rate of $\mathcal{O}(1/K^2)$). It is interesting as it not only deals with the (previously unknown) situation where the unconstrained optimum is on the boundary on $\mathcal{C}$, but also when it is arbitrarily close to it, leading to poorly conditioned linear convergence regimes. In Appendix \ref{app:affine_invariant_garber_analysis}, we  provide an affine invariant analysis of \citep{garber2015faster}.

\section{Smoothness and Strong Convexity w.r.t.\ General Distance Functions}\label{sec:assumptions}

The major limitation in the definition of smoothness of a function (Definition \ref{def:smoothness_fun}) and the strong convexity of a set (Definition \ref{def:strong_convexity_set}) is the presence of the norm in their definition, whose constants may be dependent on affine transformation of the space (see Example \ref{eq:example_not_affine_invariant}). Technically, the notion of norm in the definition of smoothness and strong convexity of a function can be extended to the concept of distance-generating function, for instance using Bregman divergence \citep{bauschke2017descent,lu2018relatively} or gauge functions \citep{d2018optimal}.

Although is it classical to use different distance-generating functions $\omega$ (that satisfies Assumption \ref{assum:distance_fun} below) to characterize the smoothness of a function, we are not aware of such analysis for strongly convex sets. We believe that such analysis may exist, but for completeness we propose here an extension of the strong convexity of a set w.r.t.\ a distance function $\omega$.

\begin{assumption} \label{assum:distance_fun}
The function $\omega(\cdot)$ satisfies
\begin{itemize}[noitemsep,topsep=0pt]
    \item  $\omega(x) = 0 \;\; \Leftrightarrow \;\; x = 0$,
    \item \textbf{Positivity:} $\omega(x) \geq 0$,
    \item \textbf{Triangular Inequality:} $\omega( x + y) \leq \omega(x) + \omega(y)$
    \item \textbf{Positive homogeneity:} $\omega(\gamma x) = \gamma \omega(x)$, $\gamma \geq 0$,
    \item \textbf{Bounded asymmetry:} $\max_{x} \frac{\omega(x)}{\omega(-x)} \leq \kappa_{\omega}$.
\end{itemize}
\end{assumption}
Since $\omega(x)$ is convex by the triangle inequality, we define the dual distance
\begin{equation} \label{eq:dual_dist}
    \omega_{*}(v) = \max_{x : \omega(x) \leq 1} \langle v,x\rangle.
\end{equation}

\begin{remark}
    Usually, extensions of smoothness of a function use Bregman divergences (see e.g.\  \citep{lu2018relatively,bauschke2017descent}). However, the assumption that the distance-generating function is positively homogeneous is crucial in our analysis, which is unfortunately, not satisfied for most Bregman divergences.
\end{remark} 

A typical example satisfying such assumptions are gauge functions, also called \textit{Minkowski functional},
\[
    \omega_{\mathcal{Q}}(v) \defas \argmin_{\tau\geq 0} \tau \quad \text{subject to} \;\; v \in \tau \mathcal{Q},
\]
where $0\in \text{int}\mathcal{Q}$. Such distance-generating function satisfies Assumption \ref{assum:distance_fun} if the set $\mathcal{Q}$ is convex and compact, and contains $0$ in its interior. Moreover, gauge functions are affine invariant.

Usually, most works using gauge function assume that the set $\mathcal{Q}$ is \textit{centrally symmetric} \citep{d2018optimal,molinaro2020}, which add the assumption that
\[
    \omega(x) = \omega(-x).
\]
In that case, the gauge function is a norm \parencite[Theorem 15.2.]{rockafellar1970convex}. Removing symmetry extends non-trivially the definition of strongly convex sets w.r.t.\ the distance function $\omega$. We now recall the definitions of smoothness and strong convexity of a function w.r.t.\ a distance function $\omega$.
\begin{definition}\label{def:smoothness_strong_convexity_general}
    A function $f$ is  smooth (resp.\ strongly convex) w.r.t.\ the distance function $\omega$ if, for a constant $L_{\omega}$ (resp. $\mu_{\omega}$), the function satisfies
    \begin{eqnarray}
        f(y) & \leq f(x) + \langle \nabla f(x),\, y-x \rangle + \frac{L_{\omega}}{2} \omega^2(y-x), \label{eq:minkowsky_smooth_fun}\\
        f(y) & \geq f(x) + \langle \nabla f(x),\, y-x \rangle + \frac{\mu_{\omega}}{2} \omega^2(y-x) \label{eq:minkowsky_strong_convex_fun}.
    \end{eqnarray}
\end{definition}


\begin{definition} \label{def:minkow_strong_convex_set}
    A set $\mathcal{C}$ is $\alpha_{\omega}$-strongly convex w.r.t.\ $\omega$ if, for any $(x,y)\in\mathcal{C}$ and $\gamma\in[0,1]$, we have
    \begin{align*}
        z_\gamma  +\alpha_{\omega} \gamma (1-\gamma) \frac{ (1-\gamma)\omega^2(x-y) + \gamma \omega^2(y-x)}{2} z \in \mathcal{C},
    \end{align*}
    where $z_{\gamma} = \gamma x + (1-\gamma)y$, for all $z$ such that $\omega(z)\leq 1$.
\end{definition}
This definition extends the one of strongly convex sets with a general distance function that may not be a norm, see for instance \citep{garber2015faster}.

With Definition \ref{def:minkow_strong_convex_set}, the level sets of smooth and strongly convex functions are also strongly convex sets when the function $\omega$ is used. Such results appear for instance in \citep{journee2010generalized} when $\omega$ is the $\ell_2$ norm.

\begin{lemma}[Strong Convexity of Sets]~\label{thm:level_set_minkow_fun}
    Let $f$ be a $L$-smooth and $\mu$-strongly convex function w.r.t.\ $\omega$. Then, the set
    \[
        \mathcal{C} = \{ x : f(x)-f_{\star} \leq R \}
    \]
    is $\alpha$-strongly convex w.r.t.\ $\omega$, with $\alpha = \frac{ \mu_{\omega} }{\kappa_\omega\sqrt{2L_{\omega}R}}.$
\end{lemma}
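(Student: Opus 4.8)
The goal is to show that a sublevel set $\mathcal{C} = \{x : f(x) - f_\star \le R\}$ of an $L_\omega$-smooth, $\mu_\omega$-strongly convex function is $\alpha$-strongly convex w.r.t.\ $\omega$ with $\alpha = \mu_\omega/(\kappa_\omega\sqrt{2L_\omega R})$. I would follow the classical argument (as in \citet{journee2010generalized} for the $\ell_2$ case) but carefully tracking the asymmetry constant $\kappa_\omega$. Fix $x,y \in \mathcal{C}$ and $\gamma \in [0,1]$, set $z_\gamma = \gamma x + (1-\gamma)y$, and fix any $z$ with $\omega(z) \le 1$. I must exhibit that $z_\gamma + \alpha\gamma(1-\gamma)\frac{(1-\gamma)\omega^2(x-y) + \gamma\omega^2(y-x)}{2}\, z \in \mathcal{C}$, i.e.\ that $f$ evaluated at this perturbed point is still $\le f_\star + R$.

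\textbf{Step 1: upper bound $f(z_\gamma)$ using strong convexity.} Applying \eqref{eq:minkowsky_strong_convex_fun} at $x$ (resp.\ $y$) with the second argument $z_\gamma$, and taking the convex combination $\gamma\cdot(\text{first}) + (1-\gamma)\cdot(\text{second})$, the gradient terms telescope and one obtains the standard inequality
\[
    f(z_\gamma) \le \gamma f(x) + (1-\gamma) f(y) - \frac{\mu_\omega}{2}\Big(\gamma(1-\gamma)\omega^2(x-y) + (1-\gamma)\gamma\,\omega^2(y-x)\Big)\cdot\tfrac{1}{?}
\]
— more precisely the strong-convexity midpoint bound gives $f(z_\gamma) \le \gamma f(x) + (1-\gamma)f(y) - \frac{\mu_\omega}{2}\gamma(1-\gamma)\big((1-\gamma)\omega^2(x-y) + \gamma\omega^2(y-x)\big)$ after symmetrizing $\omega^2(x-y)$ vs $\omega^2(y-x)$ appropriately. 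Since $f(x) - f_\star \le R$ and $f(y) - f_\star \le R$, this yields $f(z_\gamma) - f_\star \le R - \frac{\mu_\omega}{2}\gamma(1-\gamma)D$ where $D := (1-\gamma)\omega^2(x-y) + \gamma\omega^2(y-x)$ is exactly the quantity appearing in Definition~\ref{def:minkow_strong_convex_set}.

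\textbf{Step 2: upper bound $f$ at the perturbed point using smoothness.} Write $w = z_\gamma + t z$ with $t = \alpha\gamma(1-\gamma)D/2$. By \eqref{eq:minkowsky_smooth_fun}, $f(w) \le f(z_\gamma) + t\langle \nabla f(z_\gamma), z\rangle + \frac{L_\omega}{2}t^2\omega^2(z) \le f(z_\gamma) + t\,\|\nabla f(z_\gamma)\|_{\omega_*}\,\omega(z) + \frac{L_\omega}{2}t^2$, using $\omega(z)\le 1$ and the dual-norm bound \eqref{eq:dual_dist}. The point is now to bound $\|\nabla f(z_\gamma)\|_{\omega_*}$; here strong convexity of $f$ gives, via the standard Polyak-type inequality, $\|\nabla f(z_\gamma)\|_{\omega_*}^2 \le$ (something like) $2\kappa_\omega L_\omega(f(z_\gamma) - f_\star)$ — this is where the asymmetry constant $\kappa_\omega$ enters, because passing from $\omega(-\cdot)$ to $\omega(\cdot)$ in the gradient-inequality manipulation costs a factor $\kappa_\omega$. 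Combining with Step~1, $\|\nabla f(z_\gamma)\|_{\omega_*} \le \sqrt{2\kappa_\omega L_\omega R}$ (the worst case $f(z_\gamma) - f_\star \le R$). Plugging the choice $\alpha = \mu_\omega/(\kappa_\omega\sqrt{2L_\omega R})$, so that $t \le \frac{\mu_\omega\gamma(1-\gamma)D}{2\kappa_\omega\sqrt{2L_\omega R}}$, one checks that $t\,\|\nabla f(z_\gamma)\|_{\omega_*} \le \frac{\mu_\omega}{2}\gamma(1-\gamma)D\cdot\frac{1}{\sqrt{\kappa_\omega}} \le \frac{\mu_\omega}{2}\gamma(1-\gamma)D$ and the quadratic term $\frac{L_\omega}{2}t^2$ is a higher-order (in $D$ or in $\gamma(1-\gamma)$) term that is absorbed — so that $f(w) - f_\star \le R - \frac{\mu_\omega}{2}\gamma(1-\gamma)D + \frac{\mu_\omega}{2}\gamma(1-\gamma)D = R$, giving $w \in \mathcal{C}$.

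\textbf{Main obstacle.} The delicate part is Step~2: correctly deriving the gradient bound $\|\nabla f(z_\gamma)\|_{\omega_*} \le \sqrt{2\kappa_\omega L_\omega(f(z_\gamma) - f_\star)}$ while keeping track of where exactly the bounded-asymmetry constant $\kappa_\omega$ is needed (it appears because $\omega$ is not symmetric, so the point $z_\gamma - s\,\nabla f(z_\gamma)/\|\cdot\|$ that witnesses a decrease of $f$ must be compared against $\omega(-z)$ rather than $\omega(z)$), and then verifying that the residual quadratic smoothness term $\frac{L_\omega}{2}t^2\omega^2(z)$ really is dominated so the bound closes with the claimed constant and no extra slack. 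A secondary bookkeeping point is the non-symmetric form of $D = (1-\gamma)\omega^2(x-y) + \gamma\omega^2(y-x)$ in Definition~\ref{def:minkow_strong_convex_set}, which must be matched exactly by what comes out of the strong-convexity midpoint inequality in Step~1; this is the reason the definition is stated in that particular asymmetric way rather than with a single $\omega^2(x-y)$.
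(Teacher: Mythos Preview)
Your overall two–step structure (strong convexity gives slack at $z_\gamma$; smoothness controls the perturbation) is the same as the paper's, but Step~2 has a genuine gap and a misattribution.

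\textbf{Misattribution.} The upper bound on the gradient, $\omega_*(-\nabla f(z_\gamma)) \le \sqrt{2L_\omega\big(f(z_\gamma)-f_\star\big)}$, comes from \emph{smoothness} (apply the descent lemma at the global minimizer of $f$), not from strong convexity; the Polyak--\L{}ojasiewicz inequality is the reverse (lower) bound. The asymmetry factor enters as a full $\kappa_\omega$, not $\sqrt{\kappa_\omega}$: one has $\langle \nabla f(z_\gamma), u\rangle \le \omega(-u)\,\omega_*(-\nabla f(z_\gamma)) \le \kappa_\omega\,\omega(u)\,\omega_*(-\nabla f(z_\gamma))$.

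\textbf{The gap.} The quadratic term $\tfrac{L_\omega}{2}t^2$ cannot be ``absorbed''. With the correct linear bound $t\cdot\kappa_\omega\sqrt{2L_\omega(f(z_\gamma)-f_\star)}$ and your crude estimate $f(z_\gamma)-f_\star \le R$, plugging $t=\alpha\gamma(1-\gamma)D/2$ with $\alpha=\mu_\omega/(\kappa_\omega\sqrt{2L_\omega R})$ makes the linear term equal \emph{exactly} $\tfrac{\mu_\omega}{2}\gamma(1-\gamma)D$, i.e.\ it uses up all of the strong-convexity slack. You are left with $f(w)-f_\star \le R + \tfrac{L_\omega}{2}t^2 > R$, so $w\in\mathcal{C}$ does not follow. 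There is no ``higher-order'' rescue: $t$ is a fixed quantity, not tending to zero. In particular your argument already fails in the symmetric case $\kappa_\omega=1$.

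\textbf{What the paper does instead.} It refrains from bounding $f(z_\gamma)-f_\star$ by $R$ at this stage. Writing $s\defas f(z_\gamma)-f_\star$, the requirement $f(z_\gamma+u)-f_\star\le R$ becomes a genuine quadratic inequality in $\omega(u)$,
\[
\tfrac{L_\omega}{2}\,\omega^2(u) + \kappa_\omega\sqrt{2L_\omega s}\;\omega(u) + (s-R) \le 0,
\]
which is solved exactly for the positive root via the quadratic formula. Only then is $s \le R - \mu_\omega D_\gamma$ substituted (from Step~1), and the resulting difference of square roots is rationalized, $\sqrt{b}-\sqrt{a}=(b-a)/(\sqrt{a}+\sqrt{b})\ge (b-a)/(2\sqrt{R})$, to obtain the sufficient condition $\omega(u)\le \dfrac{\mu_\omega D_\gamma}{\kappa_\omega\sqrt{2L_\omega R}}$. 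Keeping the quadratic honest, rather than splitting it into a linear piece plus a discarded square, is precisely what accommodates the $\tfrac{L_\omega}{2}\omega^2(u)$ term you tried to wave away.
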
 
We defer the proof in Appendix \ref{app:proofs_assymetric_gauges}. This result corresponds \textit{exactly} to the one of \parencite[Theorem 12]{journee2010generalized}, when we use $\omega = \|\cdot\|$.

\paragraph{Scaling Inequality.} All proofs of Frank-Wolfe methods on strongly convex sets leverage the same property. The \textit{scaling inequality} (equivalent to strong convexity of $\mathcal{C}$ \parencite[Theorem 2.1.]{goncharov2017strong})  crucially relates the Frank-Wolfe gap with $\|x_t-v_t\|^2$, see \textit{e.g.\ }\parencite[Lemma 2.1.]{kerdreux2020uniform}.
We extend the scaling inequality to strongly convex sets with generic distance functions.

\begin{lemma}[Distance Scaling Inequality]\label{lem:scaling_inequality}
Assume $\mathcal{C}$ is $\alpha_{\omega}$-strongly convex w.r.t.\ $\omega$. Then for any $x, v\in\mathcal{C}$ and $\phi\in N_{\mathcal{C}}(v)$ (normal cone), we have
\begin{equation}\label{eq:scaling_inequality}
    \langle \phi , \, v - x \rangle \geq \alpha_{\omega} \omega_*\big(\phi\big) \omega^2(v - x).
\end{equation}
In particular for any iterate $x_k$ of Frank-Wolfe and its Frank-Wolfe vertex $v_k$ (Line \ref{line:FW_LMO} in Algorithm \ref{algo:FW_general}), we have
\[
\langle -\nabla f(x_k) ; v_k - x_k \rangle \geq \alpha_{\omega} \omega_*\big(-\nabla f(x_k)\big) \omega^2(v_k - x_k).
\]
\end{lemma}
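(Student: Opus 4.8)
The plan is to mimic the classical proof of the scaling inequality for norms, replacing the norm by $\omega$ and being careful about the asymmetry. First I would fix $x,v\in\mathcal{C}$ and a normal vector $\phi\in N_{\mathcal{C}}(v)$, so that $\langle \phi, y - v\rangle \le 0$ for all $y\in\mathcal{C}$. I would then instantiate the definition of $\alpha_\omega$-strong convexity of $\mathcal{C}$ (Definition~\ref{def:minkow_strong_convex_set}) at the pair $(x,v)$, with a free parameter $\gamma\in[0,1]$ and a free direction $z$ with $\omega(z)\le 1$. This gives that the point
\[
    y_\gamma \defas z_\gamma + \alpha_\omega\,\gamma(1-\gamma)\,\frac{(1-\gamma)\omega^2(x-v) + \gamma\,\omega^2(v-x)}{2}\, z
\]
lies in $\mathcal{C}$, where $z_\gamma = \gamma x + (1-\gamma)v$.

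Next I would plug $y_\gamma$ into the normal-cone inequality $\langle \phi, y_\gamma - v\rangle \le 0$. Writing $z_\gamma - v = \gamma(x-v)$, this becomes
\[
    \gamma\langle \phi, x - v\rangle + \alpha_\omega\,\gamma(1-\gamma)\,\frac{(1-\gamma)\omega^2(x-v) + \gamma\,\omega^2(v-x)}{2}\,\langle \phi, z\rangle \le 0.
\]
Dividing by $\gamma>0$ and rearranging yields
\[
    \langle \phi, v - x\rangle \ge \alpha_\omega(1-\gamma)\,\frac{(1-\gamma)\omega^2(x-v) + \gamma\,\omega^2(v-x)}{2}\,\langle \phi, z\rangle.
\]
Now I would choose $z$ to maximize $\langle \phi, z\rangle$ over $\{\omega(z)\le 1\}$, which by definition~\eqref{eq:dual_dist} equals $\omega_*(\phi)$. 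Then I would take $\gamma\to 0$ (or just evaluate the limit of the right-hand side): as $\gamma\to 0$, the factor $(1-\gamma)\to 1$ and $(1-\gamma)\omega^2(x-v) + \gamma\,\omega^2(v-x)\to \omega^2(x-v)$, giving
\[
    \langle \phi, v - x\rangle \ge \alpha_\omega\,\omega_*(\phi)\,\frac{\omega^2(x-v)}{2}\cdot 2^{-1}\cdot 2 \,.
\]
I need to track the constant $\tfrac12$ carefully here; the factor structure in Definition~\ref{def:minkow_strong_convex_set} is designed precisely so that the $\tfrac12$ cancels in the limit, leaving $\langle \phi, v-x\rangle \ge \alpha_\omega\,\omega_*(\phi)\,\omega^2(v-x)$. (If a stray factor survives, I would instead optimize over $\gamma\in(0,1)$ rather than take the limit; the stated inequality should follow for the right normalization, and this bookkeeping is the one place the asymmetry of $\omega$, i.e.\ the presence of both $\omega^2(x-v)$ and $\omega^2(v-x)$, actually matters.)

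Finally, the second displayed claim is just the specialization: for a Frank-Wolfe iterate $x_k$ with vertex $v_k \in \argmax_{v\in\mathcal{C}}\langle -\nabla f(x_k), v-x_k\rangle$, optimality of $v_k$ means $\langle -\nabla f(x_k), v - v_k\rangle \le 0$ for all $v\in\mathcal{C}$, i.e.\ $-\nabla f(x_k) \in N_{\mathcal{C}}(v_k)$. Applying the first part with $\phi = -\nabla f(x_k)$, $v = v_k$, $x = x_k$ gives the result immediately. The main obstacle I anticipate is not conceptual but the constant-tracking in the limiting argument: making sure the asymmetric averaged term $\tfrac{(1-\gamma)\omega^2(x-v)+\gamma\omega^2(v-x)}{2}$ combines with $\gamma(1-\gamma)$ so that, after dividing by $\gamma$ and letting $\gamma\to 0$, exactly $\omega^2(v-x)$ (and not half of it) remains on the right-hand side — this is where one must either trust the normalization baked into Definition~\ref{def:minkow_strong_convex_set} or, if needed, extract the bound from an interior $\gamma$ rather than the endpoint.
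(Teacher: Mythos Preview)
Your approach is essentially identical to the paper's: it too instantiates Definition~\ref{def:minkow_strong_convex_set} at the pair $(x,v_\phi)$, uses optimality of $v_\phi$ (equivalently $\phi\in N_{\mathcal{C}}(v_\phi)$) to get the inequality, divides out $\gamma$, sets $\gamma=0$, and maximizes over $z$ to produce $\omega_*(\phi)$. Your uneasiness about the surviving $\tfrac12$ and the $\omega^2(x-v)$ versus $\omega^2(v-x)$ bookkeeping is well-placed---the paper's proof glosses over precisely this point (it simply asserts the conclusion ``in particular when $\gamma=0$'' without displaying the limiting constant), so you are already tracking the argument at least as carefully as the original.
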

\begin{proof}
    We start with $v_{\phi} = \argmax_{v\in\mathcal{C}}\langle \phi;\; v \rangle$. Then, we use the definition of strong convexity of a set,
    \[
        \gamma x + (1-\gamma)v_{\phi} + \alpha_{\omega}\gamma(1-\gamma) D_\gamma z \in\mathcal{C} \quad \forall z : \omega(z) \leq 1.
    \]
    where $D_\gamma(x-y)\defas \frac{\gamma \omega^2(x-y) + (1-\gamma)\omega^2(y-x)}{2} $. Then, by optimality of $v_{\phi}$,
    \[
        \langle \phi;\; v_{\phi} \rangle \geq \langle \phi;\; \gamma x + (1-\gamma)v_{\phi} + \alpha_{\omega}\gamma(1-\gamma) D_\gamma(x-v_{\phi}) z \rangle
    \]
    After simplification,
    \[
        \langle \phi;\; v_{\phi}-x \rangle \geq \alpha_{\omega}(1-\gamma) D_\gamma(x-v_{\phi}) \langle \phi;\;  z \rangle
    \]
    which holds in particular when $\phi = -\nabla f(x)$, $\gamma = 0$ and $z$ being the argmax (see \eqref{eq:dual_dist}).
\end{proof}

\section{Directional Smoothness}\label{sec:directional_smoothness}
We separately introduced smoothness for functions, and strong convexity for sets w.r.t.\ a distance function $\omega$.
Analyses of Frank-Wolfe algorithm on strongly convex sets \citep{levitin1966constrained,demyanov1970,dunn1979rates} show that, when $f$ is convex and smooth, and the unconstrained minima of $f$ are outside of $\mathcal{C}$, there is linear convergence.

We hence propose a novel condition that mingles the smoothness of $f$ with the strong convexity of $\mathcal{C}$ when moving in a specific direction $\delta$. We are interested in particular with the FW direction and we will see later that this assumption guarantees a linear convergence rate in this case. We call this condition the \textit{directional smoothness}.
\begin{definition} \label{def:directionnal_smoothness}
    The function $f$ is \textit{directionally smooth} with direction function $\delta :\mathcal{C} \rightarrow \mathbb{R}^d$ if there exists a constant $\mathcal{L}_{f,{\delta}}>0 $ s.t.\ $\forall x \in \mathcal{C}$ and $h>0$ with $x+ h\delta(x)\in\mathcal{C}$,
    \begin{align} \label{eq:directionnal_smoothness}
        f\big(x + h\delta(x)\big) \leq & f(x) - h\langle -\nabla f(x),\, \delta(x) \rangle \\
        & + \frac{\mathcal{L}_{f,\delta} h^2}{2}\langle - \nabla f(x),\, \delta(x) \rangle.\nonumber
    \end{align}
\end{definition}

The rationale of Definition \ref{eq:directionnal_smoothness} is to replace the norm in the usual smoothness condition (Definition \ref{def:smoothness_fun}) by a scalar product between the \textit{direction} and the negative gradient, in order to get an affine invariant quantity for the FW direction (see Proposition \ref{prop:affine_invariance_directional_smoothness} below).

Assuming $\delta(x)$ is a descent direction, i.e., $\langle - \nabla f(x),\, \delta(x)\rangle > 0$, we can obtain a minimization algorithm for $f$, by minimizing \eqref{eq:directionnal_smoothness} over $h$,
\[
    x_{k+1} = x_{k} + h_{\text{opt}} \delta(x_k), \;\; h_{\text{opt}} = \min\{h_{\max}\;;\;\mathcal{L}^{-1}_{f,{\delta}} \}.
\]

\begin{example} \textit{(Gradient descent on smooth functions)}
The gradient algorithm uses $\delta(x) = -\nabla f(x)$. In such case, the function is directionally smooth with constant $L$, and we obtain
\begin{align*}
    f(x_{k+1}) & \leq f(x_k) - h \|\nabla f(x) \|^2 + \textstyle \frac{L h^2}{2}\|\nabla f(x)\|^2 \\
    & = f(x) - h\left( \textstyle \frac{Lh}{2}-1\right)\|\nabla f(x)\|^2.
\end{align*}
The best $h$ is given by $h_{\text{opt}} = \frac{1}{L}$, which is also the optimal one  \citep{nesterov2013introductory}.
\end{example}

The advantage of directional smoothness is its affine invariance in the case where $\delta(x)$ is the FW step.
\begin{proposition}[Affine Invariance of $\mathcal{L}_{f,{\delta}}$]\label{prop:affine_invariance_directional_smoothness}
If $\delta(x)$ is affine covariant (e.g.\ the FW direction $\delta(x) \triangleq v(x)-x$), then $\mathcal{L}_{f,{\delta}}$ in \eqref{eq:directionnal_smoothness} is invariant to an affine transformation of the constraint set  (proof in Appendix \ref{sec:affine_invariance_directional_smoothness}).
\end{proposition}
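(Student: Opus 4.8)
The plan is to reduce the claim to a one-line change-of-variables computation whose crux is that the scalar product $\langle -\nabla f(x),\delta(x)\rangle$ appearing in Definition~\ref{def:directionnal_smoothness} is \emph{itself} affine invariant once $\delta$ is affine covariant; everything else in \eqref{eq:directionnal_smoothness} is then matched term by term.

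First I would fix notation. Write the affine transformation as $y = Bx+b$, so that \eqref{eq:opt_problem} is carried to \eqref{eq:opt_problem_affine_transformation} with $\tilde f(y) = f(B^{-1}(y-b))$; a feasible point $x\in\mathcal{C}$ corresponds to $y = Bx+b \in \tilde{\mathcal{C}}$, and $\tilde f(y) = f(x)$. By the chain rule, $\nabla\tilde f(y) = B^{-\top}\nabla f(x)$. Affine covariance of the direction function means that directions transform through the linear part $B$ only, i.e.\ $\tilde\delta(Bx+b) = B\,\delta(x)$; for the FW direction this is immediate, since writing $w = Bv+b$ gives $\langle -\nabla\tilde f(y), w-y\rangle = \langle -B^{-\top}\nabla f(x), B(v-x)\rangle = \langle -\nabla f(x), v-x\rangle$, so the LMO at $y$ returns $\tilde v(y) = Bv(x)+b$ and hence $\tilde\delta(y) = \tilde v(y)-y = B(v(x)-x) = B\delta(x)$.

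Next I would record the elementary consequences, for $x\in\mathcal{C}$, $y=Bx+b$ and $h>0$: (i) $y+h\tilde\delta(y) = B(x+h\delta(x))+b$, hence $y+h\tilde\delta(y)\in\tilde{\mathcal{C}}$ if and only if $x+h\delta(x)\in\mathcal{C}$, and then $\tilde f(y+h\tilde\delta(y)) = f(x+h\delta(x))$; (ii) $\tilde f(y) = f(x)$; (iii) $\langle -\nabla\tilde f(y),\tilde\delta(y)\rangle = \langle -B^{-\top}\nabla f(x), B\delta(x)\rangle = \langle -\nabla f(x),\delta(x)\rangle$. Substituting (i)--(iii) into inequality \eqref{eq:directionnal_smoothness} written for $(\tilde f,\tilde\delta)$ at $(y,h)$ turns it, term by term, into \eqref{eq:directionnal_smoothness} for $(f,\delta)$ at $(x,h)$ with the \emph{same} constant. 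Since $x\mapsto Bx+b$ is a bijection of $\mathcal{C}$ onto $\tilde{\mathcal{C}}$ matching admissible pairs $(x,h)$ (those with $x+h\delta(x)\in\mathcal{C}$) with admissible pairs $(y,h)$, a constant $\mathcal{L}$ validates \eqref{eq:directionnal_smoothness} for $(\tilde f,\tilde\delta)$ on all of $\tilde{\mathcal{C}}$ exactly when it validates it for $(f,\delta)$ on all of $\mathcal{C}$; taking the infimum of valid constants on each side gives $\mathcal{L}_{\tilde f,\tilde\delta} = \mathcal{L}_{f,\delta}$.

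There is no real obstacle here: the argument is short. The only point requiring care is the bookkeeping of the transformation rules — the gradient acquires the factor $B^{-\top}$ while the covariant direction acquires $B$ — so that in the pairing $\langle -\nabla f,\delta\rangle$ the two factors cancel. This cancellation, together with the fact that $B$ and $b$ drop out of the membership test $x+h\delta(x)\in\mathcal{C}$, is precisely what Definition~\ref{def:directionnal_smoothness} is built to exploit, and it is exactly what fails for the norm-based Definition~\ref{def:smoothness_fun}, where the term $\|x-y\|^2$ becomes $\|B(x-y)\|^2$ and does not simplify.
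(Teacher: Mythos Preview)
Your proof is correct and follows essentially the same change-of-variables argument as the paper: both reduce the claim to the cancellation $\langle -\nabla\tilde f(y),\tilde\delta(y)\rangle = \langle -\nabla f(x),\delta(x)\rangle$ that comes from pairing the $B^{-\top}$ on the gradient with the $B$ on the covariant direction. Your write-up is in fact a bit more careful than the paper's --- you keep the translation $b$, explicitly match the feasibility conditions $x+h\delta(x)\in\mathcal{C}\Leftrightarrow y+h\tilde\delta(y)\in\tilde{\mathcal{C}}$, and close the equality $\mathcal{L}_{\tilde f,\tilde\delta}=\mathcal{L}_{f,\delta}$ by taking the infimum over valid constants on both sides --- whereas the paper only writes out one inequality and leaves the reverse implicit.
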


The next theorem shows that, in the case of the FW algorithm, the directional smoothness constant is bounded if the function is smooth and the set is strongly convex for any distance function $\omega$. We use this result later, to show that affine invariant backtracking line-search is equivalent to using the best distance function $\omega$ to define $L_{\omega},\,c_{\omega}$ and $\alpha_\omega$.
\begin{theorem}[Directional Smoothness of FW] \label{thm:directionnal_smoothness_bound}
    Consider the function $f$, smooth w.r.t.\ the distance function $\omega$, with constant $L_\omega$, and the set $\mathcal{C}$, strongly convex with constant $\alpha_\omega$.\\ Let $\delta(x) = x-v(x)$, $v(x)$ being the FW corner
    \[
        v(x) \defas \argmin_{v\in \mathcal{C}} \langle \nabla f(x),\, v \rangle.
    \]
    Then, if $\omega_*(-\nabla f(x)) > c_{\omega}$ for all $x\in\mathcal{C}$, the function $f(x)$ is directionally smooth w.r.t.\ to $\omega$, with constant
    \begin{equation} \label{eq:bound_ratio_directional_smoothness}
        \mathcal{L}_{f,{\delta}} \leq \frac{L_{\omega}}{c_{\omega}\alpha_{\omega}}.
    \end{equation}
\end{theorem}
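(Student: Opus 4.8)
The goal is to bound the directional-smoothness constant $\mathcal{L}_{f,\delta}$ for the FW direction $\delta(x) = x - v(x)$ using the smoothness of $f$ w.r.t.\ $\omega$, the strong convexity of $\mathcal{C}$ w.r.t.\ $\omega$, and the lower bound $\omega_*(-\nabla f(x)) > c_\omega$. The natural route is to start from the $\omega$-smoothness inequality \eqref{eq:minkowsky_smooth_fun} applied along the FW direction, and then use the distance scaling inequality (Lemma \ref{lem:scaling_inequality}) to convert the $\omega^2$ term into something proportional to the Frank-Wolfe gap $\langle -\nabla f(x), \delta(x)\rangle$ (up to sign conventions; note $\delta(x)=x-v(x)$ so $\langle -\nabla f(x),\delta(x)\rangle = \langle \nabla f(x), v(x)-x\rangle \le 0$, i.e.\ one works with $\langle -\nabla f(x), v(x)-x\rangle \ge 0$ and tracks signs carefully).

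First I would write, for $x \in \mathcal{C}$ and $h > 0$ with $x + h\delta(x) \in \mathcal{C}$, the smoothness bound
\[
    f(x + h\delta(x)) \le f(x) + h\langle \nabla f(x), \delta(x)\rangle + \frac{L_\omega h^2}{2}\omega^2(\delta(x)),
\]
using positive homogeneity of $\omega$ to pull out the $h^2$. The first-order term $\langle \nabla f(x),\delta(x)\rangle = \langle \nabla f(x), x - v(x)\rangle = -\langle -\nabla f(x), v(x)-x\rangle$ is exactly $-h^{-1}\cdot h\langle -\nabla f(x),\delta(x)\rangle$ matched to the shape of \eqref{eq:directionnal_smoothness} (with $\delta$ pointing the "wrong" way, the descent direction being $-\delta$; I'd align the sign conventions with Definition \ref{def:directionnal_smoothness} so the statement comes out as written). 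The key substitution: apply Lemma \ref{lem:scaling_inequality} with $\phi = -\nabla f(x)$ and $v = v(x)$, which (since $v(x)$ maximizes $\langle -\nabla f(x), \cdot\rangle$ over $\mathcal{C}$, hence $-\nabla f(x)$ lies in the normal cone at $v(x)$) gives
\[
    \langle -\nabla f(x), v(x) - x\rangle \ge \alpha_\omega\, \omega_*(-\nabla f(x))\, \omega^2(v(x)-x) \ge \alpha_\omega c_\omega\, \omega^2(\delta(x)),
\]
using $\omega(\delta(x)) = \omega(x - v(x))$ — here one must be slightly careful because $\omega$ is asymmetric, so $\omega^2(v(x)-x)$ and $\omega^2(x-v(x))$ differ; but the FW-vertex version of Lemma \ref{lem:scaling_inequality} is stated with $\omega^2(v_k - x_k)$, so I'd either invoke that form directly or absorb a $\kappa_\omega$ factor. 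This yields $\omega^2(\delta(x)) \le \frac{1}{\alpha_\omega c_\omega}\langle -\nabla f(x), v(x)-x\rangle$.

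Substituting back into the smoothness inequality, the quadratic term becomes
\[
    \frac{L_\omega h^2}{2}\omega^2(\delta(x)) \le \frac{L_\omega h^2}{2\alpha_\omega c_\omega}\langle -\nabla f(x), v(x)-x\rangle,
\]
which is precisely the form $\frac{\mathcal{L}_{f,\delta} h^2}{2}\langle -\nabla f(x), \delta(x)\rangle$ appearing in \eqref{eq:directionnal_smoothness} (modulo the direction sign), with $\mathcal{L}_{f,\delta} = \frac{L_\omega}{c_\omega \alpha_\omega}$. Since the definition only requires the existence of \emph{some} constant satisfying the inequality for all valid $x, h$, and we have produced a uniform one, the bound \eqref{eq:bound_ratio_directional_smoothness} follows.

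The main obstacle I anticipate is \emph{bookkeeping of signs and of the asymmetry factor $\kappa_\omega$}: Definition \ref{def:directionnal_smoothness} writes $\delta$ as a descent direction with $\langle -\nabla f(x),\delta(x)\rangle > 0$, whereas the theorem sets $\delta(x) = x - v(x)$, for which $\langle -\nabla f(x), \delta(x)\rangle \le 0$ — so either there is a sign typo in the theorem or $\delta$ should be $v(x) - x$; I would reconcile this first and state the convention explicitly. Secondly, because $\omega$ need not be symmetric, the step identifying $\omega^2(\delta(x))$ with the quantity controlled by Lemma \ref{lem:scaling_inequality} may cost a factor depending on $\kappa_\omega$ unless one uses exactly the FW-vertex formulation of that lemma; checking that this does not degrade the clean bound $\frac{L_\omega}{c_\omega\alpha_\omega}$ is the one genuinely delicate point. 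Everything else is a direct chaining of the two displayed inequalities.
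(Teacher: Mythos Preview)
Your proposal is correct and follows essentially the same route as the paper: start from $\omega$-smoothness along the FW direction, invoke the distance scaling inequality (Lemma~\ref{lem:scaling_inequality}) to replace $\omega^2(\delta(x))$ by $\tfrac{1}{\alpha_\omega\,\omega_*(-\nabla f(x))}\langle -\nabla f(x),\delta(x)\rangle$, and then use the uniform lower bound $c_\omega$. Your two caveats are on point: the paper's proof tacitly treats $\delta(x)=v(x)-x$ (as in Theorem~\ref{thm:affine_invariant_linear_convergence}) despite the statement writing $x-v(x)$, and it avoids any $\kappa_\omega$ factor by using the FW-vertex form of Lemma~\ref{lem:scaling_inequality}, which already gives $\omega^2(v_k-x_k)=\omega^2(\delta(x))$ directly.
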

\begin{proof}
    See Appendix \ref{sec:directionnal_smoothness_bound} for the proof.
\end{proof}

\section{Affine Invariant Linear Rates}\label{sec:affine_invariant_analysis}

With the directional smoothness constant $\mathcal{L}_{f.\delta}$ (affine invariant when $\delta$ is the FW direction), Theorem \ref{thm:affine_invariant_linear_convergence} shows an affine invariant linear rate of convergence of FW, generalizing existing convergence results of Frank-Wolfe on strongly convex sets \citep{levitin1966constrained,demyanov1970,dunn1979rates}.

\begin{theorem}[Affine Invariant Linear Rates]\label{thm:affine_invariant_linear_convergence}
    Assume $f$ is a convex function and directionally smooth with direction function $\delta$ with constant $\mathcal{L}_{f,{\delta}}$. 
    Then, the FW Algorithm \ref{algo:FW_general} with step size
    \[
        \textstyle h_{\text{opt}} = \min\left\{1, \; \frac{1}{\mathcal{L}_{f,{\delta}}} \right\}, \quad \text{with } \delta = v(x)-x,
    \]
    or with line-search, where $v(x)$ is the FW corner
    \[
        v(x) = \argmin_{v\in\mathcal{C}} \langle \nabla f(x),\, v \rangle,
    \]
    converges linearly, at rate
    \[
        \textstyle f(x_k)-f_\star \leq  \max\left\{ \frac{1}{2},\; 1- \frac{1}{2\mathcal{L}_{f,{\delta}}} \right\}\left(f(x_{k-1})-f_\star\right).
    \]
\end{theorem}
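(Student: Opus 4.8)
The plan is to combine the directional smoothness inequality with convexity in the standard Frank-Wolfe telescoping argument. Fix an iterate $x_{k-1}$, write $\delta = v(x_{k-1}) - x_{k-1}$ (note the sign: here the update is $x_k = x_{k-1} + h(v(x_{k-1}) - x_{k-1})$, so $\delta(x) = v(x) - x$ is the descent direction with $\langle -\nabla f(x_{k-1}), \delta\rangle = \langle \nabla f(x_{k-1}), x_{k-1} - v(x_{k-1})\rangle \eqqcolon g_{k-1} \geq 0$ the Frank-Wolfe gap). Plugging $h\delta$ into \eqref{eq:directionnal_smoothness} gives
\[
    f(x_k) \leq f(x_{k-1}) - h\, g_{k-1} + \tfrac{\mathcal{L}_{f,\delta} h^2}{2} g_{k-1} = f(x_{k-1}) - h\big(1 - \tfrac{\mathcal{L}_{f,\delta} h}{2}\big) g_{k-1}
\]
for any admissible $h \in [0,1]$ (and the line-search iterate does at least as well as any fixed $h$, so it suffices to analyze $h = h_{\text{opt}}$). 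By convexity, $g_{k-1} = \langle \nabla f(x_{k-1}), x_{k-1} - v(x_{k-1})\rangle \geq \langle \nabla f(x_{k-1}), x_{k-1} - x_\star\rangle \geq f(x_{k-1}) - f_\star$, since $x_\star \in \mathcal{C}$ and $v(x_{k-1})$ is the LMO minimizer. Abbreviate $h_\star \coloneqq f(x_{k-1}) - f_\star$ and $h_k \coloneqq f(x_k) - f_\star$; subtracting $f_\star$ from both sides yields $h_k \leq h_{k-1} - h(1 - \tfrac{\mathcal{L}_{f,\delta} h}{2}) h_{k-1} = \big(1 - h(1 - \tfrac{\mathcal{L}_{f,\delta} h}{2})\big) h_{k-1}$.

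\textbf{Next I would} do the elementary case split on $h_{\text{opt}} = \min\{1, 1/\mathcal{L}_{f,\delta}\}$. If $\mathcal{L}_{f,\delta} \leq 1$, then $h_{\text{opt}} = 1$ and $h(1 - \tfrac{\mathcal{L}_{f,\delta} h}{2}) = 1 - \tfrac{\mathcal{L}_{f,\delta}}{2} \geq \tfrac12$, so the contraction factor is $\leq 1 - (1 - \tfrac{\mathcal{L}_{f,\delta}}{2}) = \tfrac{\mathcal{L}_{f,\delta}}{2} \leq \tfrac12$; one checks this equals $\max\{\tfrac12, 1 - \tfrac{1}{2\mathcal{L}_{f,\delta}}\}$ in this range since $1 - \tfrac{1}{2\mathcal{L}_{f,\delta}} \leq \tfrac12 \Leftrightarrow \mathcal{L}_{f,\delta} \leq 1$, and $\tfrac{\mathcal{L}_{f,\delta}}{2} \leq \tfrac12$. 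If instead $\mathcal{L}_{f,\delta} > 1$, then $h_{\text{opt}} = 1/\mathcal{L}_{f,\delta} < 1$, the expression $h(1 - \tfrac{\mathcal{L}_{f,\delta} h}{2})$ is maximized exactly at $h = 1/\mathcal{L}_{f,\delta}$ (its unconstrained maximizer, which lies in $[0,1]$), giving value $\tfrac{1}{2\mathcal{L}_{f,\delta}}$, hence contraction factor $\leq 1 - \tfrac{1}{2\mathcal{L}_{f,\delta}}$, which is $\geq \tfrac12$ here, so again it matches $\max\{\tfrac12, 1 - \tfrac{1}{2\mathcal{L}_{f,\delta}}\}$. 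Combining both cases gives $f(x_k) - f_\star \leq \max\{\tfrac12, 1 - \tfrac{1}{2\mathcal{L}_{f,\delta}}\}(f(x_{k-1}) - f_\star)$, as claimed.

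\textbf{The main obstacle} I anticipate is a bookkeeping one rather than a conceptual one: ensuring the admissibility condition "$x + h\delta(x) \in \mathcal{C}$" in Definition \ref{def:directionnal_smoothness} is genuinely satisfied at the step size we use. For $h \in [0,1]$ this is automatic because $x_k = (1-h)x_{k-1} + h\, v(x_{k-1})$ is a convex combination of two points of $\mathcal{C}$, and $h_{\text{opt}} \leq 1$ always — so the directional smoothness inequality is legitimately applicable. A secondary subtlety is the line-search variant: one must argue that the exact line-search value of $f(x_k)$ is no larger than the value obtained with the fixed step $h_{\text{opt}} \in [0,1]$, which is immediate since the line-search minimizes over all $\gamma \in [0,1]$ and $h_{\text{opt}}$ is a feasible choice. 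I do not expect to need strong convexity of $f$ or any norm-dependent quantity here — that machinery is hidden inside Theorem \ref{thm:directionnal_smoothness_bound}, which bounds $\mathcal{L}_{f,\delta}$; the present theorem takes $\mathcal{L}_{f,\delta}$ as given, so the proof is short and purely the one-step descent computation above.
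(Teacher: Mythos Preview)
Your proof is correct and follows essentially the same route as the paper: apply the directional smoothness bound, minimize the right-hand side over $h\in[0,1]$ with the two-case split $h_{\text{opt}}\in\{1,1/\mathcal{L}_{f,\delta}\}$, and then invoke the Frank--Wolfe gap inequality $g_{k-1}\ge f(x_{k-1})-f_\star$ from convexity. Your version is slightly more explicit than the paper's (you verify the $\max$ formula in both branches and address admissibility of $h\in[0,1]$ and the line-search variant, which the paper leaves implicit), but the argument is the same; the only slip is the notational typo $h_\star$ where you meant $h_{k-1}$.
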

\begin{proof}
    We start with the directional smoothness assumption. For $ 0<h<1$,
    \begin{align*}
        f\big(x_{k+1}\big) \leq & \textstyle  f(x_k) + \left(h- \frac{\mathcal{L}_{f,{\delta}} h^2}{2}\right)\langle \nabla f(x_k),\, \delta(x_k) \rangle 
    \end{align*}
    After minimization, we have two possibilities: $h_{\text{opt}} = \frac{1}{\mathcal{L}_{f,{\delta}}}$ or $h_{\text{opt}} = 1$. In the first case, we obtain
    \[
         \textstyle f\big(x_{k+1}\big) \leq f(x_k) + \frac{1}{2\mathcal{L}_{f,{\delta}}} \langle \nabla f(x_k),\, \delta(x_k) \rangle 
    \]
    Notice that the scalar product in the right-hand-side is the negative dual gap of Frank-Wolfe, that satisfies
    \[
        \langle\nabla f(x_k),\, v(x)-x \rangle \leq -\left( f(x_k)-f_{\star}  \right),
    \]
    which gives the desired result. The second case follows immediately.
\end{proof}


This provides an affine invariant analysis of the linear convergence regimes of FW on strongly convex sets. 

The next proposition shows that the directional constant in Theorem \ref{thm:affine_invariant_linear_convergence} is bounded by \eqref{eq:bound_ratio_directional_smoothness} w.r.t.\ the distance function $\omega$ that gives the best ratio. This means that the Frank-Wolfe method acts like it optimizes the function in the best possible geometry, i.e., the geometry that gives the \textit{best constants}.

\begin{proposition}[Optimality of Dir. Smoothness] 
    Let $\Omega$ the set of function defined as
    \[
        \Omega = \{\omega : \omega \text{ satisfies assumptions \ref{assum:distance_fun}}\}.
    \]
    Then, the directional smoothness constant follows
    \[
        \mathcal{L}_{f,{\delta}} \leq \min_{\omega \in \Omega} \frac{L_\omega}{c_\omega\alpha_\omega},
    \]
    where $L_\omega$ is the smoothness constant of the function $f$, $\alpha_\omega$ the strong convexity of the set $\mathcal{C}$ and \[
    c_\omega\leq \omega_*\big(-\nabla f(x)\big), \quad \forall x\in\mathcal{C}.\]
\end{proposition}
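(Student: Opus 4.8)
The plan is to derive this proposition as a direct corollary of Theorem~\ref{thm:directionnal_smoothness_bound}. Recall that $\mathcal{L}_{f,{\delta}}$ denotes the \emph{smallest} constant for which Definition~\ref{def:directionnal_smoothness} holds with $\delta$ the Frank-Wolfe direction; this infimum is well-defined and itself admissible, because the Frank-Wolfe gap $\langle -\nabla f(x),\,\delta(x)\rangle$ is nonnegative, so enlarging a valid constant only relaxes inequality~\eqref{eq:directionnal_smoothness} — i.e. the set of admissible constants is an interval of the form $[\mathcal{L}_{f,{\delta}},\infty)$. Consequently, to prove the claimed upper bound it suffices to show that, for every $\omega\in\Omega$, the number $L_\omega/(c_\omega\alpha_\omega)$ is an admissible directional smoothness constant, and then to take the infimum over $\omega$.

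Fix $\omega\in\Omega$. If $f$ fails to be smooth with respect to $\omega$, or $\mathcal{C}$ fails to be strongly convex with respect to $\omega$, or $\omega_*(-\nabla f(\cdot))$ is not bounded below by a positive constant on $\mathcal{C}$, then at least one of $L_\omega$, $1/\alpha_\omega$, $1/c_\omega$ is $+\infty$ and the term $L_\omega/(c_\omega\alpha_\omega)$ does not constrain the minimum, so there is nothing to check. Otherwise $L_\omega,\alpha_\omega,c_\omega$ are finite and positive with $c_\omega\le \omega_*(-\nabla f(x))$ for all $x\in\mathcal{C}$. Theorem~\ref{thm:directionnal_smoothness_bound} then applies — taking, if one wants the hypothesis $\omega_*(-\nabla f(x))>c_\omega$ to hold strictly, any $c_\omega' < c_\omega$ and letting $c_\omega'\uparrow c_\omega$ afterward — and yields that $f$ is directionally smooth along the Frank-Wolfe direction with a constant no larger than $L_\omega/(c_\omega\alpha_\omega)$. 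By minimality of $\mathcal{L}_{f,{\delta}}$, this gives $\mathcal{L}_{f,{\delta}}\le L_\omega/(c_\omega\alpha_\omega)$.

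Since $\omega\in\Omega$ was arbitrary, taking the infimum over $\Omega$ yields $\mathcal{L}_{f,{\delta}}\le \inf_{\omega\in\Omega} L_\omega/(c_\omega\alpha_\omega)$, which is the stated bound once we read the $\min$ as this infimum (it is attained whenever the family of admissible gauges achieving finite ratios is suitably compact; otherwise the statement should be understood with $\inf$). The argument has no deep obstacle: all the analytic content is already in Theorem~\ref{thm:directionnal_smoothness_bound}, and the only points requiring care are the bookkeeping above — fixing the convention that $\mathcal{L}_{f,{\delta}}$ is the tightest constant, discarding the inadmissible $\omega$'s, and the harmless passage from the strict to the non-strict lower bound on $\omega_*(-\nabla f)$. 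As a consistency check with Proposition~\ref{prop:affine_invariance_directional_smoothness}, note that gauge functions are affine invariant, so $\min_{\omega\in\Omega} L_\omega/(c_\omega\alpha_\omega)$ is itself affine invariant, matching the affine invariance of $\mathcal{L}_{f,{\delta}}$.
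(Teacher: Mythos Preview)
Your proposal is correct and follows essentially the same approach as the paper: both obtain the bound by invoking Theorem~\ref{thm:directionnal_smoothness_bound} for each admissible $\omega$ and then optimizing over $\omega\in\Omega$. The paper's proof is a one-liner (``the FW algorithm does not use $\omega$, so choose the best $\omega$ in Theorem~\ref{thm:directionnal_smoothness_bound}''), whereas you add careful bookkeeping about the minimality convention for $\mathcal{L}_{f,\delta}$, the strict-versus-nonstrict lower bound on $\omega_*(-\nabla f)$, and the $\min$ versus $\inf$ distinction --- all reasonable clarifications but not a different argument.
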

\begin{proof}
    The proof is immediate by noticing that the FW algorithm do not use $\omega$, therefore we can choose the best $\omega$ in Theorem \ref{thm:directionnal_smoothness_bound}.
\end{proof}

To obtain a similar affine invariant analysis without restriction on the position of the optimum, \textit{i.e.} the $\mathcal{O}(1/K^2)$ analysis in \citep{garber2015faster}, one can define a similar property to the direction smoothness defined in Section \ref{sec:directional_smoothness}.
This new structural assumption additionally mingles together with the strong convexity of $f$. We provide details in Appendix \ref{app:affine_invariant_garber_analysis}.
We choose to focus the analysis for the linear convergence in the main text as it is the one most significant in practice.

\section{Affine Invariant Backtracking}\label{sec:practical_affine_invariant_backtracking}

In previous sections, we proposed new constants to bound the rate of convergence of the Frank-Wolfe algorithm, which is affine invariant. The significant advantage of these constants is that, like FW, they are independent of any norm. However, the optimal step size of Frank-Wolfe needs the knowledge of these constants.  

We propose in this section an affine invariant backtracking technique (Algorithm \ref{algo:backtraking_directionnal_smooth}), based on directional smoothness. By construction, the backtracking technique finds automatically an estimate of the directional smoothness that satisfies
\[
    \mathcal{L}_k < 2\mathcal{L}_{f,\delta}, \quad k \geq \textstyle \log_2\left(\frac{\mathcal{L}_0}{\mathcal{L}_{f,\delta}}\right).
\]

\begin{algorithm}
  \caption{Affine invariant backtracking
  }\label{algo:backtraking_directionnal_smooth}
  \begin{algorithmic}[1]
    \REQUIRE FW corner $v_k$, point $x_k$, directional smoothness estimate $\mathcal{L}_k$, function $f$.
    \STATE Define the optimal step size and next iterate in the function of the directional Lipchitz constant: 
    \begin{align*}
        \gamma_\star(\mathcal{L}) & \defas \min\{\textstyle \frac{1}{\mathcal{L}},1\},\\
        x(\mathcal{L}) & \defas (1-\gamma_\star(\mathcal{L}))x_k + \gamma_{\star}(\mathcal{L})v_k.
    \end{align*}
    \STATE Create the model of $f$ between $x_k$ and $x(\mathcal{L})$ based on equation \eqref{eq:directionnal_smoothness},
    \[
        m(\mathcal{L}) \defas f(x_k) + \gamma_\star(\mathcal{L})\left(1-\gamma_\star(\mathcal{L})\right) \langle \nabla f(x_k),\, v_k-x_k\rangle
    \]
    \STATE Set the current estimate $\tilde{\mathcal{L}} \defas \frac{\mathcal{L}_k}{2}$.
    \WHILE{ $f( x(\tilde{\mathcal{L}}) ) > m(\tilde{\mathcal{L}})$ (Sufficient decrease not met because $\tilde{\mathcal{L}}$ is too small)}
        \STATE Double the estimate : $\tilde{\mathcal{L}}\leftarrow 2\cdot \tilde{\mathcal{L}}$.
    \ENDWHILE
    \ENSURE Estimate $\mathcal{L}_{k+1} = \tilde{\mathcal{L}}$, iterate $x_{k+1} = x(\tilde{\mathcal{L}})$
  \end{algorithmic}
\end{algorithm}

\section{Why Backtracking FW with norms is so efficient?}\label{sec:explanation_efficiency_ls}

The step size strategy in Frank-Wolfe usually drives its practical efficiency. Sometimes, setting the step size optimally w.r.t.\ the theoretical analysis may be suboptimal in practice. Recently, \citet{pedregosa2020linearly} analyze the rate of the Frank-Wolfe algorithm for smooth function, using \textit{backtracking line search}, described in Algorithm \ref{algo:backtraking_smooth}, Appendix \ref{sec:backtraking_smooth}.

Algorithm \ref{algo:backtraking_smooth} in Appendix \ref{sec:backtraking_smooth} is adaptive to the local smoothness constant, and ensures $L_{k+1} < 2 L_f$, $L_f$ being the smoothness constant of the function in the $\ell_2$ norm. \citet{pedregosa2020linearly} observed that the estimate of the Lipchitz constant is often significantly smaller than the theoretical one; they wrote: \textit{``We compared the average Lipschitz estimate $L_t$ and the $L$, the gradient’s Lipschitz constant. We found that across all datasets the former was more than an order of magnitude smaller, highlighting the need to use a local estimate of the Lipschitz constant to use a large step size."}

With our analysis, however, we can explain why the estimate of the smoothness constant is much better than the theoretical one. The answer is simple:
\begin{center}
    \textit{Despite using a non-affine invariant bound, the step size resulting from the estimation of the Lipchitz constant via the backtracking line-search finds $\frac{1}{\mathcal{L}_{f,{\delta}}}$.}
\end{center}
\begin{proposition} \label{prop:aff_inv_ls}
    Consider the ``local Lipchitz constant'' $L_{\text{loc}}(x)$ that satisfies \eqref{eq:smoothness_fun} with $y = x+h\delta(x)$, i.e.,
    \begin{align*}
        f(x+h\delta(x)) \leq & f(x) + \nabla f(x)(x+h\delta(x)) \\& + \textstyle L_\text{loc}(x) \frac{h^2}{2}\|\delta(x)\|^2_2.
    \end{align*}
    Then, $L_{\text{loc}}(x)$ is bounded by
    \[
        L_{\text{loc}}(x) \leq  \mathcal{L}_{f,{\delta}}\frac{\langle - \nabla f(x),\delta(x)\rangle}{\|\delta(x)\|^2}.
    \]
    Assuming $L_{\text{loc}}(x)$ ``locally constant", the backtracking line-search finds $L_k < 2 L_{\text{loc}}(x_k)$, and its step size $\gamma_{\star}$ satisfies
    \[
        \min\left\{1,\frac{1}{2\mathcal{L}_{f,{\delta}}}\right\} \leq \gamma_\star.
    \]
\end{proposition}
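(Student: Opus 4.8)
The plan is to start from the directional smoothness inequality \eqref{eq:directionnal_smoothness} with $\delta(x)$ the FW direction, and rewrite it as a statement about the Euclidean local Lipschitz constant $L_{\text{loc}}(x)$. Comparing \eqref{eq:directionnal_smoothness} term-by-term with the Euclidean descent inequality defining $L_{\text{loc}}(x)$, the first-order terms $-h\langle-\nabla f(x),\delta(x)\rangle$ match identically, while the quadratic terms give
\[
    L_{\text{loc}}(x)\,\frac{h^2}{2}\|\delta(x)\|_2^2 \;\le\; \frac{\mathcal{L}_{f,\delta}\,h^2}{2}\langle-\nabla f(x),\delta(x)\rangle,
\]
so dividing by $\tfrac{h^2}{2}\|\delta(x)\|_2^2$ yields the claimed bound $L_{\text{loc}}(x)\le \mathcal{L}_{f,\delta}\,\langle-\nabla f(x),\delta(x)\rangle/\|\delta(x)\|_2^2$. (Strictly, one should phrase $L_{\text{loc}}(x)$ as the smallest constant making the Euclidean inequality hold along the ray $x+h\delta(x)$; then directional smoothness exhibits $\mathcal{L}_{f,\delta}\langle-\nabla f(x),\delta(x)\rangle/\|\delta(x)\|_2^2$ as one admissible value, hence an upper bound on the smallest.)

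Next I would invoke the known guarantee of the $\ell_2$ backtracking line-search (Algorithm~\ref{algo:backtraking_smooth} of \citet{pedregosa2020linearly}, Appendix~\ref{sec:backtraking_smooth}): under the ``locally constant'' assumption it returns an estimate $L_k < 2L_{\text{loc}}(x_k)$, and the associated step size is the minimizer over $[0,1]$ of the quadratic model, namely
\[
    \gamma_\star \;=\; \min\left\{1,\;\frac{\langle-\nabla f(x_k),\delta(x_k)\rangle}{L_k\|\delta(x_k)\|_2^2}\right\}.
\]
Substituting $L_k < 2L_{\text{loc}}(x_k)$ and then the bound on $L_{\text{loc}}(x_k)$ from the first step gives
\[
    \frac{\langle-\nabla f(x_k),\delta(x_k)\rangle}{L_k\|\delta(x_k)\|_2^2}
    \;>\; \frac{\langle-\nabla f(x_k),\delta(x_k)\rangle}{2L_{\text{loc}}(x_k)\|\delta(x_k)\|_2^2}
    \;\ge\; \frac{1}{2\mathcal{L}_{f,\delta}},
\]
where the last inequality is exactly the rearrangement of $L_{\text{loc}}(x_k)\le \mathcal{L}_{f,\delta}\langle-\nabla f(x_k),\delta(x_k)\rangle/\|\delta(x_k)\|_2^2$. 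Taking the minimum with $1$ on both sides preserves the inequality, yielding $\min\{1,\tfrac{1}{2\mathcal{L}_{f,\delta}}\}\le\gamma_\star$, which is the claim.

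The routine part is the algebraic matching of quadratic coefficients and the substitution chain; the only real subtlety — and where I would be most careful — is the precise meaning of ``locally constant'' and of $L_{\text{loc}}(x)$. The backtracking procedure certifies a decrease condition at the \emph{specific} trial point $x+h\delta(x)$ it evaluates, not a uniform smoothness bound, so one must either (i) assume $L_{\text{loc}}$ is constant on the segment $[x_k, x_k+\delta(x_k)]$ actually probed, matching the hypothesis stated in the proposition, or (ii) argue that the bound $L_{\text{loc}}(x)\le\mathcal{L}_{f,\delta}\langle-\nabla f(x),\delta(x)\rangle/\|\delta(x)\|_2^2$ holds for every point on that segment and that this suffices to control the line-search output. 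Either way the conclusion on $\gamma_\star$ follows, but the cleanest write-up pins down which version is being used and simply cites the termination guarantee of Algorithm~\ref{algo:backtraking_smooth} for the bound $L_k < 2L_{\text{loc}}$.
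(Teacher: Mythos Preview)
Your proposal is correct and follows essentially the same route as the paper's proof: rewrite the directional smoothness inequality \eqref{eq:directionnal_smoothness} so that the quadratic term reads $\frac{h^2}{2}\|\delta(x)\|_2^2$ times $\mathcal{L}_{f,\delta}\langle-\nabla f(x),\delta(x)\rangle/\|\delta(x)\|_2^2$, identify this ratio as (an upper bound on) $L_{\text{loc}}(x)$, then feed $L_k<2L_{\text{loc}}(x_k)$ into the explicit step-size formula of Algorithm~\ref{algo:backtraking_smooth} and simplify. Your chain of substitutions $L_k\to 2L_{\text{loc}}(x_k)\to 2\mathcal{L}_{f,\delta}\langle-\nabla f(x_k),\delta(x_k)\rangle/\|\delta(x_k)\|_2^2$ is in fact slightly cleaner than the paper's write-up, which defines $L_{\text{loc}}$ by equality (via $\triangleq$) rather than as a smallest admissible constant and skips the intermediate $L_k$ step in the final display.
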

\begin{proof}
    See Appendix \ref{sec:aff_inv_ls} for the proof.
\end{proof}

Therefore, the optimal step size from the backtracking line-search with the $\ell_2$ norm is \textit{exactly} the optimal affine invariant step size of our affine invariant analysis from Theorem \ref{thm:affine_invariant_linear_convergence}. 

In conclusion, \textit{even if we use non-affine invariant norms} to find the smoothness constant, surprisingly, \textit{the backtracking procedure finds the optimal, affine invariant step size}.

\section{Illustrative Experiments}\label{sec:experiments}

\begin{figure*}[t!]
    \centering
    \includegraphics[width=0.45\linewidth]{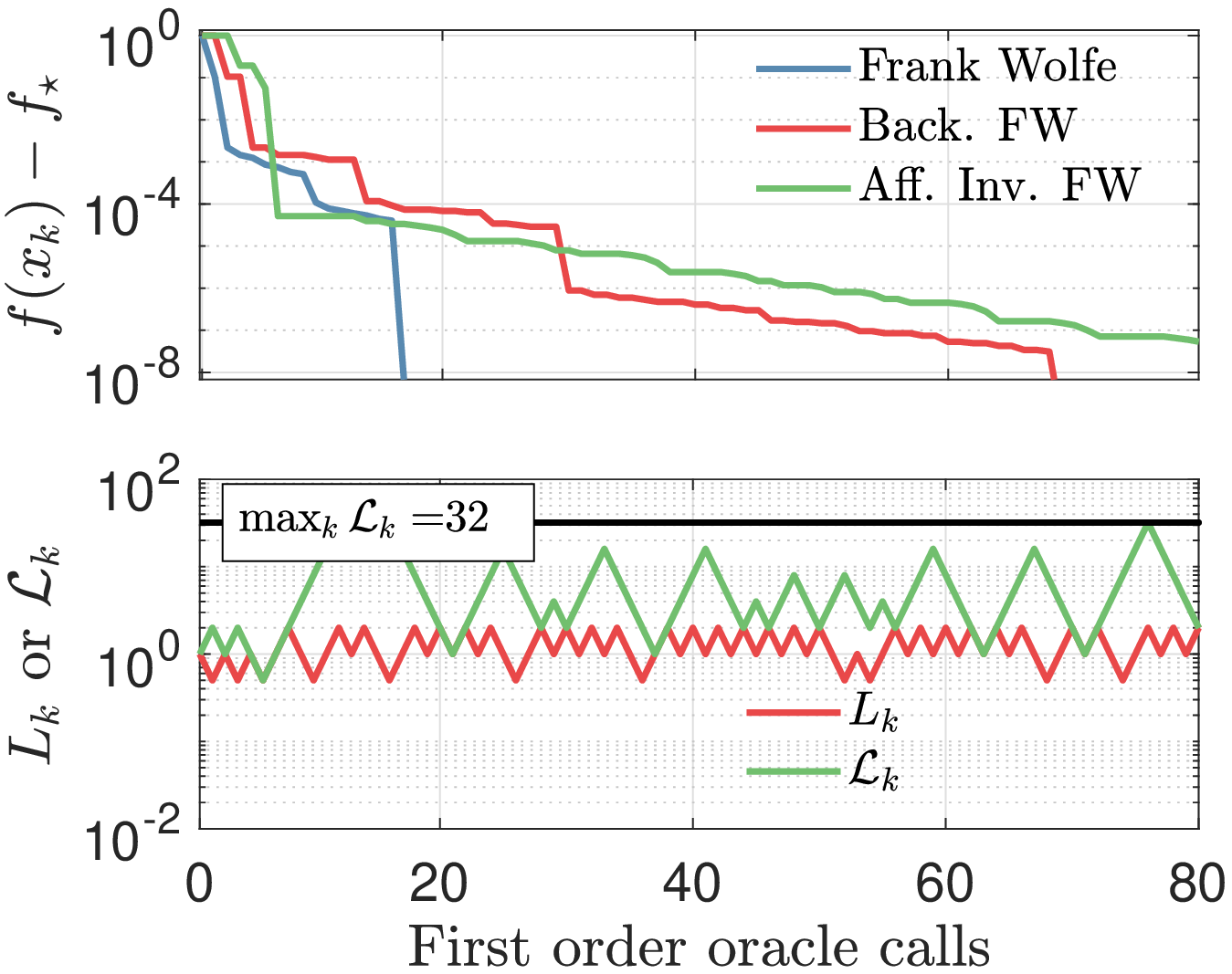}
    \includegraphics[width=0.45\linewidth]{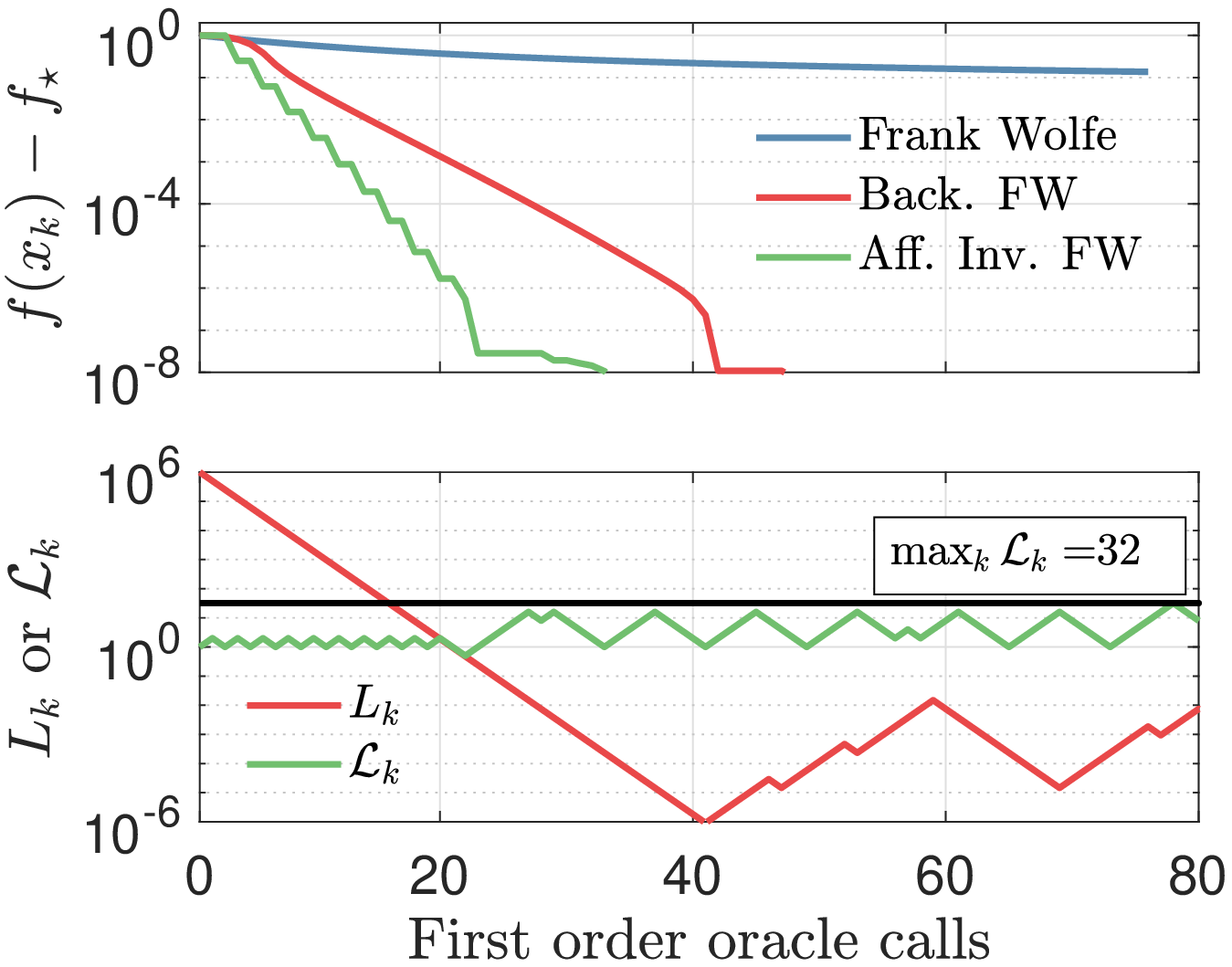}
    \caption{Comparison of FW variants on the projection problem. Left: $B=I$, Right: $\kappa(B) = 10^6$. The top row is the gap $f_k-f^*$, and the bottom row corresponds to the estimation of the directional-smoothness constant $\mathcal{L}_k$ or the smoothness constant $L_k$, where the black line report the maximum value of $\mathcal{L}_k$. The reason why adaptive FW methods are slower in the left figure is because, in the worst case, the number of iterations to reach a certain precision can be up to four times larger than the worst-case bound on non-adaptive methods. We clearly see that the directional smoothness parameter $\mathcal{L}_{f,\delta}$ is affine invariant, as its estimate is $\max_k\mathcal{L}_k = 32$ in both scenarios.}    \label{fig:projection}\end{figure*}

\begin{figure}[t]
    \centering
     \includegraphics[width=0.9\linewidth]{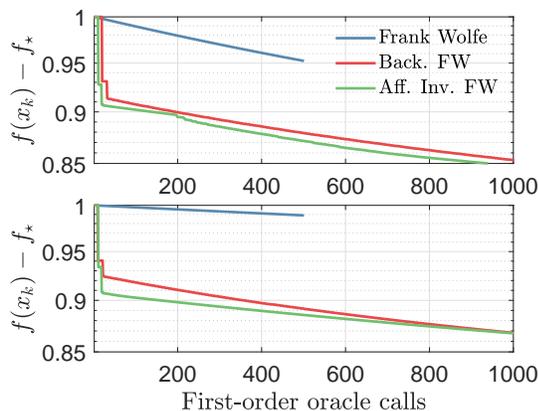}
    \caption{Classification problem on Madelon dataset, with (\textit{Top}) Quadratic loss and (\textit{Bottom}) Logistic loss.}
    \label{fig:logistc}
\end{figure}

\paragraph{Quadratic / logistic regression.} We consider the constrained quadratic and logistic regression problem,
\begin{align}
    \min_{x\in\cC}\frac{1}{n}\sum_{i=1}^n l(a_i^T x, y_i), \quad 
 \end{align}
where $l$ is the quadratic or the logistic loss. Here we adopt the $\ell_2$-ball, defined as
\[
    \cC = \{x: \|x\|_2\leq R\} , \quad R>0.
\]
Specifically, we compare our affine invariant backtracking method in Algorithm~\ref{algo:backtraking_directionnal_smooth} against the naive FW Algorithm~\ref{algo:FW_general} with step size $1/L$ \citep{demyanov1970} and back-tracking FW~\citep{pedregosa2020linearly} on the Madelon dataset~\citep{guyon2007competitive}. The results are shown in  Figure~\ref{fig:logistc}.
In detail, we set $R$ such that the unconstrained optimum $\xx^*$  satisfies  $\|\xx^*\|_2 = 1.1 R$, and the initial iterate $\xx_0= \bf{0}$. As predicted by our theory, the affine invariant algorithm performs well at the beginning, but after a few iterations the two backtracking techniques behave similarly.

\paragraph{Projection.} We solve here the projection problem described in Example \ref{eq:example_not_affine_invariant}, for two cases of $B$: One that corresponds to the original problem, i.e. $B=I$, the second one where $B$ is an ill-conditioned matrix (with the condition number $\kappa(B)=10^6$). The vector $x_0$ is random in the $\ell_2$ ball, and $\bar{x} = \textbf{1}_d\cdot (1.1/\sqrt{d})$. We report the results in Figure \ref{fig:projection}. We compare the standard FW algorithm with step size $1 / L$, the FW with backtracking line-search (Algorithm \ref{algo:backtraking_smooth}) and FW with affine invariant backtracking technique (Algorithm \ref{algo:backtraking_directionnal_smooth}). If the problem is well-conditioned ($\kappa(B) = 1$), all methods perform similarly. This is not the case, however, for the ill-conditioned setting, where the FW with no adaptive step size converges extremely slowly compared to the two other methods. We also see that the affine invariant backtracking converges quicker than the standard backtracking. This is explained by the fact that the latter takes a longer time to find the right constant $L_k$, while $\mathcal{L}_k$ remains untouched after an affine transformation.

\section{Conclusion}

In this paper, our theoretical convergence results on strongly convex sets complete the series of accelerated affine invariant analyses of Frank-Wolfe algorithms. To obtain these, we formulate a new structural assumption with respect to general distance functions, the directional smoothness, which we will explore more systematically in future works.  Also, we present a new affine invariant backtracking line-search method based on directional smoothness. Within our framework of analysis, we provide a new explanation for the reasons behind the efficiency of the existing backtracking line search, and we show theoretically and experimentally they also find affine-invariant step sizes.

\clearpage

\section*{Acknowledgments}

This research was partially supported by the Canada CIFAR AI Chair Program. Simon Lacoste-Julien is a CIFAR Fellow in the Learning in Machines \& Brains program.

%

\printbibliography

\newpage
\appendix

\onecolumn

\section{Strong Convexity of Sets with asymmetric distance functions}\label{app:proofs_assymetric_gauges}

Before presenting the proof, we introduce the following results, extending known properties from smooth and strongly convex sets.
\begin{proposition}
    If $f$ is strongly convex w.r.t.\ the distance function $\omega$, then for $\gamma \in [0,1]$ we have
    \[
        f(\gamma x + (1-\gamma)y) + \mu \gamma(1-\gamma) \frac{\gamma \omega^2(x-y) + (1-\gamma)\omega^2(y-x)}{2} \leq \gamma f(x) + (1-\gamma) f(y)
    \]
\end{proposition}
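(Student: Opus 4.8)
The plan is to run the classical midpoint argument for strongly convex functions, adapted to the fact that $\omega$ is only positively homogeneous and need not be symmetric. First I would fix $\gamma\in[0,1]$ and set $z_\gamma \defas \gamma x + (1-\gamma)y$. The only algebraic inputs needed are that $x-z_\gamma = (1-\gamma)(x-y)$ and $y-z_\gamma = -\gamma(x-y) = \gamma(y-x)$, and that, since the scalars $1-\gamma$ and $\gamma$ are nonnegative, positive homogeneity (Assumption~\ref{assum:distance_fun}) gives $\omega^2(x-z_\gamma) = (1-\gamma)^2\omega^2(x-y)$ and $\omega^2(y-z_\gamma) = \gamma^2\omega^2(y-x)$.

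Next I would apply the strong convexity inequality \eqref{eq:minkowsky_strong_convex_fun} twice, both times with base point $z_\gamma$, once with the other argument equal to $x$ and once equal to $y$:
\begin{align*}
    f(x) &\geq f(z_\gamma) + (1-\gamma)\langle \nabla f(z_\gamma),\, x-y\rangle + \tfrac{\mu}{2}(1-\gamma)^2\omega^2(x-y),\\
    f(y) &\geq f(z_\gamma) - \gamma\langle \nabla f(z_\gamma),\, x-y\rangle + \tfrac{\mu}{2}\gamma^2\omega^2(y-x).
\end{align*}
Forming the convex combination — multiplying the first line by $\gamma$, the second by $1-\gamma$, and adding — the inner-product contributions cancel, since $\gamma(1-\gamma)-(1-\gamma)\gamma=0$ (equivalently $\gamma(x-z_\gamma)+(1-\gamma)(y-z_\gamma)=z_\gamma-z_\gamma=0$). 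This leaves
\[
    \gamma f(x) + (1-\gamma)f(y) \geq f(z_\gamma) + \tfrac{\mu}{2}\big(\gamma(1-\gamma)^2\omega^2(x-y) + (1-\gamma)\gamma^2\omega^2(y-x)\big),
\]
and factoring $\gamma(1-\gamma)$ out of the last bracket rewrites the quadratic term as $\mu\gamma(1-\gamma)\,\tfrac{(1-\gamma)\omega^2(x-y) + \gamma\omega^2(y-x)}{2}$, i.e.\ $\mu\gamma(1-\gamma)$ times the convex combination of the two directed squared distances weighting used in Definition~\ref{def:minkow_strong_convex_set}; rearranging gives the stated inequality.

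There is essentially no obstacle: it is a routine two-line application of the definition. The one point that requires care is the asymmetry of $\omega$ — one must keep $\omega(x-y)$ and $\omega(y-x)$ as distinct quantities throughout and only invoke positive homogeneity with the nonnegative scalars $1-\gamma$ and $\gamma$ (never with $-\gamma$), since $\omega(-v)$ is not assumed to equal $\omega(v)$. This bookkeeping is exactly what produces the convex combination of $\omega^2(x-y)$ and $\omega^2(y-x)$ in the statement rather than a single symmetric term (up to the harmless relabeling $x\leftrightarrow y$, which leaves $z_\gamma$ unchanged).
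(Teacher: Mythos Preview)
Your proof is correct and follows exactly the same route as the paper: apply the strong convexity inequality at $z_\gamma$ toward $x$ and toward $y$, take the $\gamma/(1-\gamma)$ convex combination so the gradient terms cancel, and use positive homogeneity of $\omega$ on the nonnegative scalars $1-\gamma,\gamma$. One small slip in your closing parenthetical: swapping $x\leftrightarrow y$ does \emph{not} leave $z_\gamma$ unchanged (it sends $z_\gamma$ to $z_{1-\gamma}$); the mismatch between your coefficients $(1-\gamma)\omega^2(x-y)+\gamma\omega^2(y-x)$ and the statement's $\gamma\omega^2(x-y)+(1-\gamma)\omega^2(y-x)$ is just a labeling typo in the proposition --- your form is the one consistent with Definition~\ref{def:minkow_strong_convex_set} and with the computation $x-z_\gamma=(1-\gamma)(x-y)$.
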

\begin{proof}
    Let $z_\gamma = \gamma x + (1-\gamma)y$. We start with the definition,
    \begin{align*}
        f(z_{\gamma}) + \langle \nabla f(z_{\gamma}), \, x-z_{\gamma} \rangle + \frac{\mu}{2} \omega^2(x-z_{\gamma}) & \leq f(x) \\
        f(z_{\gamma}) + \langle \nabla f(z_{\gamma}), \, y-z_{\gamma} \rangle + \frac{\mu}{2} \omega^2(y-z_{\gamma}) & \leq f(y)
    \end{align*}
    After multiplying by $\gamma$ and $1-\gamma$ and adding the two inequalities, we have
    \[
        f(z_{\gamma}) + \mu\frac{ \gamma \omega^2(x-z_{\gamma}) + (1-\gamma)\omega^2(y-z_{\gamma})}{2} \leq \gamma f(x) + (1-\gamma) f(y)
    \]
    Since $\omega^2(x-z_{\gamma}) = (1-\gamma)^2 \omega^2(y-x)$, and $\omega^2(y-z_{\gamma}) = \gamma^2 \omega^2(x-y)$, we obtain the desired result.
\end{proof}

\begin{proposition}\label{prop:dual_smooth}
    If $f$ is convex and smooth w.r.t.\ the distance function $\omega$, then it holds that
    \[
         \frac{1}{2L} \omega_*^2\big( \nabla f(x)-\nabla f(y) \big) \leq f(y)-f(x) - \langle \nabla f(x), \, y-x \rangle 
    \]
    where $\omega_*$ is the dual of the function $\omega$, written
    \[
        \omega_*(v) \defas \max_{s:\omega(s) \leq 1} \langle v,\,s \rangle.
    \]
\end{proposition}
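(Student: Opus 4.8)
The plan is to reduce this to the familiar one-line estimate for smooth convex functions, carried out with the gauge $\omega$ in place of a norm. First I would introduce the auxiliary function $g(z) \defas f(z) - \langle \nabla f(x),\, z\rangle$. It is convex (a sum of $f$ and a linear term), its gradient is $\nabla g(z) = \nabla f(z) - \nabla f(x)$, so $\nabla g(x) = 0$ and hence $x$ is a global minimizer of $g$. A one-line check shows $g$ inherits $\omega$-smoothness from $f$ with the same constant $L$: subtracting $\langle \nabla f(x),\, b\rangle$ from both sides of \eqref{eq:minkowsky_smooth_fun} applied at a generic pair $(a,b)$ gives $g(b) \leq g(a) + \langle \nabla g(a),\, b-a\rangle + \tfrac{L}{2}\omega^2(b-a)$.

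Next I would pick the direction that realizes the dual gauge. Let $s^\star$ attain the maximum in \eqref{eq:dual_dist} for $v = \nabla f(x) - \nabla f(y)$, so that $\omega(s^\star)\leq 1$ and $\langle \nabla f(x)-\nabla f(y),\, s^\star\rangle = \omega_*(\nabla f(x)-\nabla f(y))$. Applying $\omega$-smoothness of $g$ at $(a,b) = (y,\, y+h s^\star)$ for $h\geq 0$ and using positive homogeneity ($\omega(h s^\star) = h\,\omega(s^\star)\leq h$) yields
\[
    g(x) \;\leq\; g(y+h s^\star) \;\leq\; g(y) - h\,\omega_*\!\big(\nabla f(x)-\nabla f(y)\big) + \tfrac{L h^2}{2},
\]
where the first inequality holds because $x$ minimizes $g$. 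Then I would minimize the right-hand side over $h\geq 0$; the minimizer $h^\star = \omega_*(\nabla f(x)-\nabla f(y))/L$ is nonnegative since $\omega_*\geq 0$ (as $\omega(0)=0\leq 1$), giving $g(x)\leq g(y)-\tfrac{1}{2L}\omega_*^2(\nabla f(x)-\nabla f(y))$. Finally, substituting $g(y)-g(x) = f(y)-f(x)-\langle \nabla f(x),\, y-x\rangle$ produces exactly the claimed bound.

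I expect the only genuine subtlety to be the asymmetry of $\omega$: the naive choice of a steepest-descent step of the form $-s$ with $\omega(s)\leq 1$ would leak a factor $\kappa_\omega$ into the estimate, since $\omega(-s)$ need not be $\leq 1$. Choosing the step direction to be exactly the maximizer $s^\star$ from the definition of $\omega_*$ sidesteps this, because then $\omega(s^\star)\leq 1$ holds by construction. The remaining points — that $g$ inherits $\omega$-smoothness, that $\omega_*\geq 0$ so the optimal $h$ lies in the range where positive homogeneity applies, and the closing algebraic substitution — are all routine.
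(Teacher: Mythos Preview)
Your proof is correct and follows essentially the same route as the paper: both introduce the shifted function $g(z)=f(z)-\langle\nabla f(x),z\rangle$ (the paper calls it $\phi$), use its $\omega$-smoothness along a ray from $y$, and optimize over the step length to extract the $\tfrac{1}{2L}\omega_*^2$ term. The only cosmetic difference is that you fix the direction $s^\star$ realizing $\omega_*(\nabla f(x)-\nabla f(y))$ up front, whereas the paper parametrizes $y-z=\beta u$ with $\omega(u)=1$ and optimizes jointly over $(\beta,u)$; both computations land on the same inequality, and your remark about avoiding the $\kappa_\omega$ leak by choosing $s^\star$ rather than $-s$ is a correct reading of why the sign in $\omega_*(\nabla f(x)-\nabla f(y))$ comes out right.
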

In particular, Proposition \ref{prop:dual_smooth} implies that, if $f$ has a minimum $x_\star$, then
\begin{equation}
    \frac{1}{2L} \omega_*^2\big( -\nabla f(y) \big) \leq f(y)-f(x_\star)
\end{equation}
\begin{proof}
    Let the function $\phi(y) = f(y)-\langle \nabla f(x), y\rangle $. This function is, by construction, smooth. Moreover, $\min_y \phi(y)$ is attained when $y=x$. Since the function is smooth,
    \[
        \min_{y} \phi(y) \leq \min_y \phi(z) +  \langle \nabla \phi(z),\, y-z \rangle + \frac{L}{2} \omega^2(y-z)
    \]
    Let $ \beta u = y-z$, where $\omega(u) = 1$ and $\beta \geq 0$. Then,
    \[
        \min_{y} \phi(y) \leq \min_{\beta, u} \phi(z) +  \beta \langle \nabla \phi(z),\, u \rangle + \frac{\beta^2 L}{2}
    \]
    The minimum can be split into two minimization problems,
    \[
        \min_{y} \phi(y) \leq \phi(z) +  \min_{\beta\geq 0}\left( \frac{\beta^2 L}{2} - \beta  \max_{u : \omega(u) \leq 1} \langle - \nabla \phi(z),\, u \rangle\right).
    \]
    By definition of the dual of $\omega$,
    \[
        \min_{y} \phi(y) \leq \phi(z) +  \min_{\beta\geq 0}\left( \frac{\beta^2 L}{2} - \beta  \omega_{*} \big( - \nabla \phi(z)\big)\right).
    \]
    Now, we can solve over $\beta$, which gives us
    \[
        \min_{y} \phi(y) \leq \phi(z) - \frac{1}{2L} \omega_{*}^2 \big( - \nabla \phi(z)\big).
    \]
    Replacing the minimum by $\phi(x)$, and $\phi$ by its expression, we get
    \[
        f(x)- \langle \nabla f(x),\, x \rangle \leq  f(z)- \langle \nabla f(x),\, z \rangle - \frac{1}{2L} \omega_{*}^2 \big( \nabla f(x) - \nabla f(z)\big).
    \]
    After reorganization, we get the desired result.
\end{proof}

We can now show that level sets of a smooth and strong convex function are strongly convex sets, when they use the distance function $\omega$.

\begin{proof}
    (\textbf{Proof of Lemma~\ref{thm:level_set_minkow_fun}.}) Note to the reviewers: there was a small typo in our proof that was caught after the main paper deadline: the correct constant is actually $\alpha_\omega= \frac{ \mu }{\kappa_{\omega} \sqrt{2LR}}$ (i.e.\ the asymmetry factor $\kappa_\omega$ does appear in the expression, unlike was originally mentioned in the main text of Lemma~\ref{thm:level_set_minkow_fun}). This change is minor and does not change the rest of the story of the paper. 
    
    Consider the set
    \[
        \mathcal{C} = \{ x : f(x)-f_{\star} \leq R \}
    \]
    Let $x,\, y \in \mathcal{C}$.  Let $z_\gamma = \gamma x + (1-\gamma)y$, and consider the point $z_\gamma + u$. We have that
    \begin{align*}
        f(z_{\gamma} + u)-f_{\star} & \leq f(z_{\gamma})-f_{\star} + \langle \nabla f(z_{\gamma}),\, u \rangle + \frac{L}{2}\omega^2(u), \\
        & \leq f(z_{\gamma})-f_{\star} + \omega(-u)\max_{v : \omega(v) \leq 1}\langle -\nabla f(z_{\gamma}),\, v \rangle + \frac{L}{2}\omega^2(u),\\
        & = f(z_{\gamma})-f_{\star} + \omega(-u) \omega_*\big(-\nabla f(z_{\gamma})\big) + \frac{L}{2}\omega^2(u),\\
        & \leq f(z_{\gamma})-f_{\star} + \kappa_{\omega} \omega(u) \sqrt{ 2L(f(z_{\gamma})-f_{\star}) }  + \frac{L}{2}\omega^2(u).
    \end{align*}
    Therefore, to satisfy $f(z_{\gamma} + u)-f_{\star} \leq R$, we need to ensure that
    \[
        \underbrace{f(z_{\gamma})-f_{\star} - R}_{=\omega} + \underbrace{\kappa_{\omega}  \sqrt{ 2L(f(z_{\gamma})-f_{\star}) }}_{=\beta} \omega(u)  + \frac{L}{2}\omega^2(u) \leq 0
    \]
    Solving the problem in $\omega(u)$ gives
    \[
        \omega(u) \leq \frac{-\beta + \sqrt{\beta^2 - 2L\omega }}{L}
    \]
    We have that
    \[
        \beta^2 - 2L\omega =  2L \left( (f(z_{\gamma})-f_{\star}) (\kappa_{\omega}^2 -1) + R \right)
    \]
    Therefore,
    \[
        \omega(u) \leq \sqrt{2}\frac{-\kappa_{\omega}  \sqrt{ (f(z_{\gamma})-f_{\star}) } + \sqrt{(f(z_{\gamma})-f_{\star}) (\kappa_{\omega}^2 -1) + R }}{\sqrt{L}}
    \]
    However, since the function is strongly convex,
    \[
        f(z_{\gamma})-f_\star \leq \underbrace{\gamma f(x) + (1-\gamma) f(y)-f_\star}_{\leq R} - \mu \gamma(1-\gamma) \frac{\gamma \omega^2(x-y) + (1-\gamma)\omega^2(y-x)}{2}
    \]
    Let $D_\gamma = \gamma(1-\gamma) \frac{\gamma \omega^2(x-y) + (1-\gamma)\omega^2(y-x)}{2}$. The inequality now reads
    \begin{equation}
        f(z_{\gamma})-f_\star \leq R - \mu D_{\gamma}. \label{eq:bound_R_Dgamma}
    \end{equation}
    Therefore, the condition on $\omega$ becomes
    \[
        \omega(u) \leq \sqrt{2}\frac{-\kappa_{\omega}  \sqrt{ R-\mu D_\gamma } + \sqrt{(R-\mu D_\gamma) (\kappa_{\omega}^2 -1) + R }}{\sqrt{L}}
    \]
    which gives
    \begin{equation} \label{eq:omega_u_condition_complicated}
        \omega(u) \leq \frac{\kappa_{\omega}\sqrt{2}}{\sqrt{L}} \left( -\sqrt{ R-\mu D_\gamma } + \sqrt{R - \left(1-\frac{1}{\kappa_{\omega}^2}\right)\mu D_\gamma}\right)
    \end{equation}
    
    To simplify the expression in parenthesis, we multiply and divide by the conjugate of the square roots to get:
    
    \begin{align*}
    \left( -\sqrt{ R-\mu D_\gamma } + \sqrt{R - \left(1-\frac{1}{\kappa_{\omega}^2}\right)\mu D_\gamma}\right) &=
        \frac{ R - \left(1-\frac{1}{\kappa_{\omega}^2}\right)\mu D_\gamma - ( R-\mu D_\gamma)}
        {\sqrt{ R-\mu D_\gamma } + \sqrt{R - \left(1-\frac{1}{\kappa_{\omega}^2}\right)\mu D_\gamma} } \\
        &\geq \frac{1}{\kappa_{\omega}^2 2 \sqrt{R}} .
    \end{align*}
    
    We can thus strengthen the condition~\eqref{eq:omega_u_condition_complicated} to:
    \[
    \omega(u) \leq \frac{ \mu D_\gamma }{\kappa_{\omega} \sqrt{2LR}}.
    \]
    
    As the definition of a strongly convex set requires $\omega(u) \leq \alpha_\omega D_\gamma$, we conclude that the level set is strongly convex with at least the constant $\alpha_\omega= \frac{ \mu }{\kappa_{\omega} \sqrt{2LR}}$.
    \remove{
    Technically, we can use the condition as it, but in order to simplify, we use the concavity of $\sqrt{\cdot}$ as follow,
    \[
        \sqrt{y} \leq  \sqrt{x} -\frac{1}{2\sqrt{x}}(y-x) \quad \Rightarrow \quad \frac{1}{2\sqrt{x}}(y-x)\leq  \sqrt{x} - \sqrt{y}.
    \]
    With $x = R - \left(1-\frac{1}{\kappa_{\omega}^2}\right)\mu D_\gamma$ and $y =  R-\mu D_\gamma$, we obtain the sufficient condition
    \begin{align}
        \omega(u) & \leq  \frac{\kappa_{\omega}\sqrt{2}}{\sqrt{L}} \frac{\frac{\mu D_\gamma}{\kappa_{\omega}^2}}{2\sqrt{R - \left(1-\frac{1}{\kappa_{\omega}^2}\right)\mu D_\gamma}} \nonumber\\
        & = \frac{1}{\kappa_{\omega}\sqrt{2L}} \frac{\mu D_\gamma}{\sqrt{R - \left(1-\frac{1}{\kappa_{\omega}^2}\right)\mu D_\gamma}} \label{eq:temp_cond_d}
    \end{align}
    
    Since $\mu D_\gamma \leq R$ by equation \eqref{eq:bound_R_Dgamma}, we have
    \begin{align*}
        \frac{\mu D_\gamma}{\sqrt{R - \left(1-\frac{1}{\kappa_{\omega}^2}\right)\mu D_\gamma}}  & {\color{red} \geq }\frac{\mu D_\gamma}{\sqrt{R}\sqrt{1 - \left(1-\frac{1}{\kappa_{\omega}^2}\right)}}\\
        & = \frac{\mu D_\gamma \kappa_{\omega}}{\sqrt{R}}
    \end{align*}
    Therefore, the condition \eqref{eq:temp_cond_d} can be strengthen into
    \[
        \omega(u) \leq \frac{ \mu D_\gamma }{\sqrt{2LR}}.
    \]
    This means the level set is strongly convex with constant $\alpha = \frac{ \mu }{\sqrt{2LR}}$.
    }
\end{proof}

\subsection{Proof of Theorem \ref{thm:directionnal_smoothness_bound}} \label{sec:directionnal_smoothness_bound}
\begin{theorem} 
    Consider the function $f$, smooth w.r.t.\ the distance function $\omega$, with constant $L_\omega$, and the set $\mathcal{C}$, strongly convex with constant $\alpha_\omega$.\\ Let $\delta(x) = x-v(x)$, $v(x)$ being the FW corner
    \[
        v(x) \defas \argmin_{v\in \mathcal{C}} \langle \nabla f(x),\, v \rangle.
    \]
    Then, if $\omega_*(-\nabla f(x)) > c_{\omega}$ for all $x\in\mathcal{C}$, the function $f(x)$ is directionally smooth w.r.t.\ to $\omega$, with constant
    \begin{equation}
        \mathcal{L}_{f,{\delta}} \leq \frac{L_{\omega}}{c_{\omega}\alpha_{\omega}}.
    \end{equation}
\end{theorem}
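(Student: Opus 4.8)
The plan is to obtain the directional-smoothness inequality \eqref{eq:directionnal_smoothness} by starting from the $\omega$-smoothness bound \eqref{eq:minkowsky_smooth_fun} and then using the distance scaling inequality (Lemma~\ref{lem:scaling_inequality}) to convert the $\omega^2$-term of the step into the Frank--Wolfe gap. Write $\delta(x) = v(x)-x$ for the FW direction (as in Theorem~\ref{thm:affine_invariant_linear_convergence}), fix $x \in \mathcal{C}$ and $h > 0$ with $x + h\delta(x) \in \mathcal{C}$, and apply \eqref{eq:minkowsky_smooth_fun} with $y = x + h\delta(x)$. Positive homogeneity of $\omega$ (Assumption~\ref{assum:distance_fun}) gives $\omega^2(h\delta(x)) = h^2\omega^2(\delta(x))$, and using $\langle \nabla f(x),\delta(x)\rangle = -\langle -\nabla f(x),\delta(x)\rangle$ we get
\[
    f(x+h\delta(x)) \le f(x) - h\langle -\nabla f(x),\delta(x)\rangle + \frac{L_\omega h^2}{2}\,\omega^2(\delta(x)).
\]
This already matches the linear term in \eqref{eq:directionnal_smoothness}; it remains to dominate $\frac{L_\omega h^2}{2}\,\omega^2(\delta(x))$ by $\frac{\mathcal{L}_{f,{\delta}} h^2}{2}\langle -\nabla f(x),\delta(x)\rangle$ for a suitable constant.

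The key step is the scaling inequality. Since $v(x)$ maximizes $\langle -\nabla f(x),\cdot\rangle$ over $\mathcal{C}$, the vector $-\nabla f(x)$ lies in the normal cone of $\mathcal{C}$ at $v(x)$, so Lemma~\ref{lem:scaling_inequality} yields
\[
    \langle -\nabla f(x),\delta(x)\rangle \;\ge\; \alpha_\omega\,\omega_*\!\big(-\nabla f(x)\big)\,\omega^2(\delta(x)) \;\ge\; \alpha_\omega c_\omega\,\omega^2(\delta(x)),
\]
where the last step uses the hypothesis $\omega_*(-\nabla f(x)) > c_\omega$. Hence $\omega^2(\delta(x)) \le \frac{1}{\alpha_\omega c_\omega}\langle -\nabla f(x),\delta(x)\rangle$; note that $c_\omega>0$ is exactly what keeps this ratio finite (geometrically, the gradient never vanishes on $\mathcal{C}$, i.e.\ the unconstrained optimum lies outside $\mathcal{C}$). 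Substituting into the display above gives
\[
    f(x+h\delta(x)) \le f(x) - h\langle -\nabla f(x),\delta(x)\rangle + \frac{L_\omega}{\alpha_\omega c_\omega}\cdot\frac{h^2}{2}\langle -\nabla f(x),\delta(x)\rangle,
\]
which is \eqref{eq:directionnal_smoothness} with constant $L_\omega/(c_\omega\alpha_\omega)$. Since this value is admissible for every $x\in\mathcal{C}$ and every feasible $h>0$, the directional-smoothness constant satisfies $\mathcal{L}_{f,{\delta}} \le \frac{L_\omega}{c_\omega\alpha_\omega}$.

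I expect the only delicate points to be bookkeeping rather than substance: tracking the orientation of the FW direction so that the linear term lines up with \eqref{eq:directionnal_smoothness}, invoking positive homogeneity to pull out the $h^2$ factor (this is where Bregman-type distances would fail, hence the restriction in Assumption~\ref{assum:distance_fun}), and checking that Lemma~\ref{lem:scaling_inequality} genuinely applies to $\phi = -\nabla f(x)$ at the FW vertex. The conceptual content is simply that the scaling inequality trades the norm-like quantity $\omega^2(\delta(x))$ appearing in smoothness for the affine-invariant Frank--Wolfe gap $\langle -\nabla f(x),\delta(x)\rangle$ appearing in the definition of directional smoothness.
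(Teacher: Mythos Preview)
Your proof is correct and follows essentially the same approach as the paper: apply $\omega$-smoothness at $y=x+h\delta(x)$, pull out $h^2$ via positive homogeneity, and then use the distance scaling inequality (Lemma~\ref{lem:scaling_inequality}) together with the lower bound $\omega_*(-\nabla f(x))>c_\omega$ to replace $\omega^2(\delta(x))$ by the Frank--Wolfe gap. Your version is slightly more careful in justifying the auxiliary steps (homogeneity, the normal-cone condition for $-\nabla f(x)$ at $v(x)$, and the orientation $\delta(x)=v(x)-x$), but the argument is identical in substance to the paper's.
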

\begin{proof}
    We start by the definition of smooth functions between $x$ and $h \delta(x)$ for the distance function $\omega$. We have for all $0\leq h \leq 1$
    \[
        f(x + h \delta(x)) \leq f(x) + h\langle \nabla f(x),\, \delta(x) \rangle + \frac{h^2 L_{\omega}}{2} \omega^2(\delta(x))
    \]
    Using the scaling inequality in \eqref{eq:scaling_inequality},
    \[
        \langle -\nabla f(x) , \, \delta(x) \rangle \geq \alpha_\omega \omega_*\big(-\nabla f(x)\big) \omega(\delta(x))^2.
    \]
    We hence obtain
    \begin{align*}
        f(x+h\delta(x)) \leq & f(x) + h\langle \nabla f(x),\, \delta(x) \rangle - \frac{h^2 L_{\omega}}{2} \frac{\langle \nabla f(x) , \, \delta(x) \rangle}{\alpha_{\omega}  \omega_*\big(-\nabla f(x)\big)}.
    \end{align*}
    Since $\omega_*(-\nabla f(x)) > c_{\omega}$ for all $x\in\mathcal{C}$,
    \begin{align*}
        f(x+h\delta(x)) \leq & f(x) + h\langle \nabla f(x),\, \delta(x) \rangle - \frac{h^2}{2} \frac{L_{\omega}}{\alpha_{\omega}  c_{\omega}} \langle \nabla f(x) , \, \delta(x) \rangle.
    \end{align*}
    which is the definition of directional smoothness.
\end{proof}

\section{Missing proofs}

\subsection{Proof of Proposition \ref{prop:aff_inv_ls}} \label{sec:aff_inv_ls}

\begin{proposition} 
    We define the ``local Lipchitz constant'' $L_{\text{loc}}(x)$, which satisfies
    \[
        L_{\text{loc}}(x) \defas  \mathcal{L}_{f,{\delta}}\frac{\langle - \nabla f(x),\delta(x)\rangle}{\|\delta(x)\|^2}.
    \]
    Then, assuming that the local Lipchitz constant is ``locally constant", the backtracking line-search finds $L_k \leq 2 L_{\text{loc}}(x_k)$, and its step size $\gamma_{\star}$ satisfies
    \[
        \min\left\{1,\frac{1}{2\mathcal{L}_{f,{\delta}}}\right\} \leq \gamma_\star.
    \]
\end{proposition}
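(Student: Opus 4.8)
The plan is to split the proof into three pieces: (i) recognise that the quantity defining $L_{\text{loc}}(x)$ is simply the $\ell_2$-smoothness constant of $f$ along the ray generated by $\delta(x)$, obtained from directional smoothness by a change of units; (ii) use the doubling mechanism of the backtracking line-search of Algorithm~\ref{algo:backtraking_smooth} to conclude $L_k \leq 2 L_{\text{loc}}(x_k)$; and (iii) substitute into the closed-form backtracking step size and simplify.

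For (i), I would start from Definition~\ref{def:directionnal_smoothness} at the point $x$ with the descent direction $\delta(x)=v(x)-x$ (for which $\langle -\nabla f(x),\delta(x)\rangle \geq 0$ by optimality of $v(x)$): for all admissible $h>0$, $f(x+h\delta(x)) \leq f(x) + h\langle \nabla f(x),\delta(x)\rangle + \frac{\mathcal{L}_{f,{\delta}} h^2}{2}\langle -\nabla f(x),\delta(x)\rangle$, using $-h\langle -\nabla f(x),\delta(x)\rangle = h\langle \nabla f(x),\delta(x)\rangle$. Rewriting the quadratic term as $\frac{h^2}{2}\|\delta(x)\|^2 \cdot \big(\mathcal{L}_{f,{\delta}}\frac{\langle -\nabla f(x),\delta(x)\rangle}{\|\delta(x)\|^2}\big)$ exhibits exactly the right-hand side of the smoothness inequality~\eqref{eq:smoothness_fun} at $y = x+h\delta(x)$ with constant $\mathcal{L}_{f,{\delta}}\frac{\langle -\nabla f(x),\delta(x)\rangle}{\|\delta(x)\|^2}$. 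Since $L_{\text{loc}}(x)$ is the smallest constant validating that inequality along this ray, we obtain $L_{\text{loc}}(x) \leq \mathcal{L}_{f,{\delta}}\frac{\langle -\nabla f(x),\delta(x)\rangle}{\|\delta(x)\|^2}$, with equality for the definition used in the appendix restatement.

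For (ii), I would recall that, given a candidate estimate $L$, the line-search forms the step $\gamma(L) = \min\{1,\frac{\langle -\nabla f(x_k),\delta(x_k)\rangle}{L\|\delta(x_k)\|^2}\}$, tests the quadratic sufficient-decrease inequality $f(x_k+\gamma(L)\delta(x_k)) \leq f(x_k) + \gamma(L)\langle \nabla f(x_k),\delta(x_k)\rangle + \frac{\gamma(L)^2 L}{2}\|\delta(x_k)\|^2$, and doubles $L$ until it passes. By step (i) and the ``locally constant'' hypothesis, the $\ell_2$-smoothness inequality along this ray holds with any constant $\geq L_{\text{loc}}(x_k)$, in particular at the trial step $h=\gamma(L)\in[0,1]$; hence every $L \geq L_{\text{loc}}(x_k)$ passes the test. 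Therefore the accepted $L_k$ in the doubling sequence has its immediate predecessor $L_k/2$ rejected, so $L_k/2 < L_{\text{loc}}(x_k)$, i.e.\ $L_k \leq 2 L_{\text{loc}}(x_k)$ (in the regime of $k$ where $L_k$ has become small enough to trigger the decrease-then-double behaviour, matching the guarantee stated for Algorithm~\ref{algo:backtraking_directionnal_smooth}).

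For (iii), I plug $L_k \leq 2 L_{\text{loc}}(x_k)$ into the step size: $\gamma_\star = \min\{1,\frac{\langle -\nabla f(x_k),\delta(x_k)\rangle}{L_k\|\delta(x_k)\|^2}\} \geq \min\{1,\frac{\langle -\nabla f(x_k),\delta(x_k)\rangle}{2 L_{\text{loc}}(x_k)\|\delta(x_k)\|^2}\}$, and substituting $L_{\text{loc}}(x_k) = \mathcal{L}_{f,{\delta}}\frac{\langle -\nabla f(x_k),\delta(x_k)\rangle}{\|\delta(x_k)\|^2}$ collapses the ratio to $\frac{1}{2\mathcal{L}_{f,{\delta}}}$, yielding $\gamma_\star \geq \min\{1,\frac{1}{2\mathcal{L}_{f,{\delta}}}\}$. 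The only genuinely delicate step is (ii): making the ``local constancy'' assumption precise enough that the acceptance test at the trial point is truly governed by $L_{\text{loc}}(x_k)$ rather than by $L_{\text{loc}}$ evaluated at a nearby iterate; steps (i) and (iii) are just the change of units and a one-line substitution.
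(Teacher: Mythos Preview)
Your proof is correct and follows essentially the same route as the paper: start from directional smoothness, multiply and divide by $\|\delta(x)\|^2$ to rewrite the quadratic term in $\ell_2$-smoothness form with the constant $L_{\text{loc}}(x)$, invoke the doubling mechanism to get $L_k \leq 2 L_{\text{loc}}(x_k)$, and substitute into the backtracking step size to collapse the ratio to $1/(2\mathcal{L}_{f,\delta})$. You are actually more explicit than the paper on step~(ii), which the paper dispatches in a single sentence (``If we assume that $L_{\text{loc}}(x_k)$ is approximately constant, then Algorithm~\ref{algo:backtraking_smooth} finds $L_k \leq 2L_{\text{loc}}(x_k)$''); your caveat about what ``locally constant'' must really mean for the acceptance test to be governed by $L_{\text{loc}}(x_k)$ is well placed.
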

\begin{proof}
We start with the definition of directional smoothness,
\begin{align*}
f(x+h\delta(x)) \leq & f(x) + h \langle\nabla f(x),\,\delta(x)\rangle  + \left[\mathcal{L}_{f,{\delta}}\langle - \nabla f(x),\delta(x)\rangle\right] \frac{h^2}{2}.
\end{align*}
Writing $1 = \frac{\|\delta(x)\|_2^2}{\|\delta(x)\|_2^2}$, the upper bound becomes
\begin{align*}
    & f(x) + h \langle\nabla f(x),\,\delta(x)\rangle  \quad + \left[\frac{\mathcal{L}_{f,{\delta}}\langle 
    - \nabla f(x),\delta(x)\rangle}{\|\delta(x)\|_2^2}\right] \frac{h^2\|\delta(x)\|_2^2}{2}.
\end{align*}
Defining 
\[
    L_{\text{loc}}(x)\triangleq\frac{\mathcal{L}_{f,{\delta}}\langle - \nabla f(x),\delta(x)\rangle}{\|\delta(x)\|_2^2},
\]
we obtain
\begin{align*}
    f(x_k+h\delta(x_k)) \leq & f(x_k) + h \langle\nabla f(x_k),\,\delta(x_k)\rangle  + L_{\text{loc}}(x_k) \frac{h^2\|\delta(x_k)\|_2^2}{2}.
\end{align*}
If we assume that $L_{\text{loc}}(x_k)$ is approximately constant, then Algorithm \ref{algo:backtraking_smooth} finds $L_k \leq 2L_{\text{loc}}(x_k)$. Finally, using the definition of $\gamma_{\star}$ in Algorithm \ref{algo:backtraking_smooth}, we have
\begin{align*}
    \gamma_{\star} & =  \min\left\{\frac{-\nabla f(x_k)(v_k-x_k)}{L_{\text{loc}}(x_k)\|v_k-x_k\|^2}, 1\right\}\\
    & \geq \min\left\{\frac{1}{2\mathcal{L}_{f,{\delta}}}, 1\right\}.
\end{align*}
\end{proof}

\subsection{Proof of Proposition \ref{prop:affine_invariance_directional_smoothness}} \label{sec:affine_invariance_directional_smoothness}
\begin{proposition}[Affine Invariance]
If $\delta(x)$ is affine covariant (e.g.\  the Frank-Wolfe direction $\delta(x) \triangleq v(x)-x$), then the constant $\mathcal{L}_{f,{\delta}}$ in \eqref{eq:directionnal_smoothness} is affine invariant. In other words, let 
\[
    \tilde f(\cdot) \triangleq f(B\cdot), \;\; \tilde \delta_{\tilde{\mathcal{C}}}(\cdot) \triangleq \delta_ {B\cdot\mathcal{C}}(\cdot),
\]
then $\mathcal{L}_{\tilde f,\tilde \delta_{\tilde{\mathcal{C}}}} = \mathcal{L}_{f,\delta}$.
\end{proposition}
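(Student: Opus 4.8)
The plan is to show that the inequality defining directional smoothness for the transformed pair $(\tilde f,\tilde\delta)$, after the change of variables $x = By$, becomes \emph{verbatim} the inequality defining directional smoothness for $(f,\delta)$, and moreover that the feasibility constraints under which each must hold are in bijection. This forces the two sets of admissible constants to coincide, hence their infima $\mathcal{L}_{\tilde f,\tilde\delta}$ and $\mathcal{L}_{f,\delta}$ are equal.

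First I would record the transformation rules for the data appearing in \eqref{eq:directionnal_smoothness}. With $\tilde f(y)=f(By)$ and $\tilde{\mathcal C}=B^{-1}\mathcal C$, the chain rule gives $\nabla\tilde f(y)=B^{\top}\nabla f(By)$, and feasibility transfers as $y\in\tilde{\mathcal C}\iff By\in\mathcal C$. Next I would check that the FW direction is affine covariant: since $\langle B^{\top}\nabla f(By),v\rangle=\langle\nabla f(By),Bv\rangle$ and $v\mapsto Bv$ is a bijection from $\tilde{\mathcal C}$ onto $\mathcal C$, the FW corner satisfies $\tilde v(y)=B^{-1}v(By)$, so $\tilde\delta(y)=\tilde v(y)-y=B^{-1}\bigl(v(By)-By\bigr)=B^{-1}\delta(By)$; equivalently, one only uses the hypothesis that $\delta$ is affine covariant, i.e.\ $\tilde\delta(y)=B^{-1}\delta(By)$.

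Then I would substitute, writing $x=By$. The left-hand side becomes $\tilde f(y+h\tilde\delta(y))=f\bigl(B(y+hB^{-1}\delta(x))\bigr)=f(x+h\delta(x))$, and $\tilde f(y)=f(x)$. For the linear terms, $\langle-\nabla\tilde f(y),\tilde\delta(y)\rangle=\langle-B^{\top}\nabla f(x),B^{-1}\delta(x)\rangle=\langle-\nabla f(x),\delta(x)\rangle$, where $B$ and $B^{-1}$ cancel via $\langle B^{\top}a,c\rangle=\langle a,Bc\rangle$; the same holds inside the $h^2$ term. Finally, the side condition $y+h\tilde\delta(y)\in\tilde{\mathcal C}$ is equivalent to $x+h\delta(x)\in\mathcal C$, and $y$ ranges over $\tilde{\mathcal C}$ iff $x$ ranges over $\mathcal C$. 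Hence a constant $\mathcal{L}$ satisfies \eqref{eq:directionnal_smoothness} for $(\tilde f,\tilde\delta)$ exactly when it satisfies it for $(f,\delta)$, giving $\mathcal{L}_{\tilde f,\tilde\delta}=\mathcal{L}_{f,\delta}$. A translation $y\mapsto y+b$ is handled identically: it leaves gradients and the covariant direction unchanged, so the general affine case follows.

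There is no genuine obstacle here; the only care needed is in the bookkeeping of which slot of the inner product $B$ lands in (so that $B$ and $B^{-1}$ cancel rather than multiply), and in checking that the \emph{feasibility constraints} of Definition~\ref{def:directionnal_smoothness}, not merely its inequality, transfer — otherwise one risks proving only a one-sided comparison of the two constants.
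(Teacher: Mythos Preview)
Your proof is correct and follows essentially the same substitution argument as the paper: use $\nabla\tilde f(y)=B^{\top}\nabla f(By)$ and affine covariance $\tilde\delta(y)=B^{-1}\delta(By)$ to show that the directional smoothness inequality for $(\tilde f,\tilde\delta)$ at $y$ is identical to that for $(f,\delta)$ at $x=By$. If anything, your version is slightly more careful than the paper's own proof, which only writes out one direction explicitly (showing $\tilde f$ is directionally smooth with constant $\mathcal{L}_{f,\delta}$) and does not check the transfer of the feasibility constraint $x+h\delta(x)\in\mathcal{C}$; your bijection argument closes both of these loose ends.
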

\begin{proof}
    We start with the definition of directional smoothness, but with $x\rightarrow By$. The upper bound reads
    \begin{align*}
         f(By) &+ \left(h- \frac{\mathcal{L}_{f,\delta} h^2}{2}\right)\langle \nabla f(By),\, \delta(By) \rangle 
    \end{align*}
    Since we assumed $\delta(By)$ affine covariant,
    \[
        \delta(By) = B \tilde\delta_{\tilde{\mathcal{C}}}(y).
    \]
    Therefore,
    \begin{align*}
         f(By) &+ \left(h- \frac{\mathcal{L}_{f,\delta} h^2}{2}\right)\langle B^T\nabla f(By),\, \tilde\delta_\mathcal{\tilde C}(y) \rangle 
    \end{align*}
    Since $\nabla \tilde f(y) = B^T\nabla f(By) $, we have
    \begin{align*}
         \tilde f(\tilde y + h \tilde \delta_{\tilde{\mathcal{C}}}(y)) \leq \tilde f(y) &+ \left(h- \frac{\mathcal{L}_{f,\delta} h^2}{2}\right)\langle \nabla \tilde f(y),\, \tilde\delta_\mathcal{\tilde C}(y) \rangle 
    \end{align*}
    This means the function $\tilde f$ is directionally smooth with constant $\mathcal{L}_{f,\delta}$, which proves the statement.
\end{proof}

\section{Backtracking Line Search for Frank-Wolfe Steps} \label{sec:backtraking_smooth}

\begin{algorithm}
  \caption{Backtracking line-search for smooth functions \citep{pedregosa2020linearly}}\label{algo:backtraking_smooth}
  \begin{algorithmic}[1]
    \REQUIRE FW corner $v_k$, point $x_k$, smoothness estimate $L_k$, function $f$.
    \STATE Create the optimal step size and next iterate in the function of the Lipchitz estimate 
    \begin{align*}
        \gamma_\star(L) & \defas \min\left\{\frac{-\nabla f(x_k)(v_k-x_k)}{L\|v_k-x_k\|^2}, 1\right\}.\\
        x(L) & \defas (1-\gamma_\star(L)) + \gamma_{\star}(L)v_k
    \end{align*}
    \STATE Quadratic model of $f$ between $x_k$ and $x(L)$,
    \[
        m(L) \defas f(x_k) + \langle \nabla f(x_k),\, x(L)-x_k\rangle + \frac{L}{2}\|x(L)-x_k\|^2
    \]
    \STATE Set the current estimate $\tilde L \defas \frac{L_k}{2}$.
    \WHILE{ $f( x(\tilde L) ) > m(\tilde L)$ (Sufficient decrease not met because $\tilde L$ is too small)}
        \STATE Double the estimate : $\tilde L\leftarrow 2\cdot \tilde L$.
    \ENDWHILE
    \ENSURE Estimate $L_{k+1} = \tilde L$, iterate $x_{k+1} = x(\tilde L)$
  \end{algorithmic}
\end{algorithm}

\section{Affine Invariant Analysis without Restriction on Optimum Location}\label{app:affine_invariant_garber_analysis}
In this section, we propose a modification of the directional smoothness defined in Section \ref{sec:directional_smoothness}. This new assumption is the basis to obtain an affine invariant analysis of Frank-Wolfe on a strongly convex set without restriction on the position of the unconstrained optimum of $f$, as recently proposed in \citet{garber2015faster}.  

\paragraph{Outline.} In Theorem \ref{thm:affine_invariant_sublinear_convergence}, we prove a $\mathcal{O}(1/K^2)$ sublinear convergence rate as in \citep{garber2015faster} when the function is \textit{modified directionally smooth} (Definition \ref{def:modified_directionnal_smoothness}). In Theorem \ref{thm:modified_directionnal_smoothness_bound}, we prove that when $\mathcal{C}$ is strongly convex, and $f$ is smooth and strongly convex, then $f$ is \textit{modified directionally smooth} for the Frank-Wolfe direction with an affine invariant constant leading to better conditioned convergence rates than in \citep{garber2015faster}. Finally, in Proposition \ref{prop:affine_invariance_modified_directional_smoothness}, we show that the constant of modified directional smoothness is affine invariant.

We now define a modification of directional smoothness. It is a structural assumption on $f$ constrained on $\mathcal{C}$ designed at gathering the strong convexity of $\mathcal{C}$, the smoothness, and the strong convexity of $f$ into a single quantity.

\begin{definition}[Modified Directional Smoothness] \label{def:modified_directionnal_smoothness}
    Let $x_0\in\mathcal{C}$. The function $f$ is called \textit{modified directionally smooth} with direction function $\delta :\mathcal{C} \rightarrow \mathbb{R}^N$ if there exists a constant $\tilde{\mathcal{L}}_{f,{\delta}}(x_0)>0 $ such that $\forall x \in \mathcal{C}$,
    \begin{equation} \label{eq:modified_directionnal_smoothness}
    f\big(x + h\delta(x)\big) \leq  f(x) + h\langle \nabla f(x),\, \delta(x) \rangle - \frac{\tilde{\mathcal{L}}_{f,\delta}(x_0) h^2}{2}
    \langle \nabla f(x),\, \delta(x) \rangle
    \sqrt{\frac{f(x_0) - f^*}{f(x) - f^*}},
    \end{equation}
    for $ 0 < h < 1$.
\end{definition}

Note that the dependence of $x_0$ in the definition of the modified directional smoothness is an artifact to obtain a dimensionless constant $\tilde{\mathcal{L}}_{f,\delta}(x_0)$.

As in Section \ref{sec:affine_invariant_analysis}, the modified directional smoothness constant $\tilde{\mathcal{L}}_{f.\delta}$ is affine invariant in the case where $\delta$ is the FW direction. We now derive an affine invariant accelerated sublinear rate of convergence of Frank-Wolfe providing an affine invariant analysis of \citep{garber2015faster}.

\begin{theorem}[Affine Invariant Accelerated Sublinear Rates]\label{thm:affine_invariant_sublinear_convergence}
    Let $x_0\in\mathcal{C}$ and assume $f$ is a convex function and modified directionally smooth with direction function $\delta$ and constant $\tilde{\mathcal{L}}_{f,{\delta}}(x_0)$.
    Then, the iterates~$x_k$ for the Frank-Wolfe Algorithm~\ref{algo:FW_general} with step size
    \[
        \textstyle h_{\text{opt}} = \min\left\{1, \;  \frac{1}{\tilde{\mathcal{L}}_{f,{\delta}}(x_0)} \sqrt{\frac{f(x_k) - f^*}{f(x_0) - f^*}} \right\}, \quad \text{with } \delta = v(x)-x,
    \]
    or with exact line-search, where $v(x)$ is the Frank-Wolfe corner
    \[
        v(x) = \argmin_{v\in\mathcal{C}} \langle \nabla f(x),\, v \rangle,
    \]
    satisfy
    \[
    f(x_k) - f^* \leq \frac{4 (f(x_0) - f^*)\max\{1 , \, 18 \tilde{\mathcal{L}}^2_{f,{\delta}}(x_0) \}}{(k+2)^2} \quad \text{for $k \geq 0$.}
    \]
\end{theorem}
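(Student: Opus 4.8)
The plan is to mimic the proof structure used in accelerated convergence analyses (e.g.\ the one in \citep{garber2015faster}), but phrased entirely in terms of the modified directional smoothness constant $\tilde{\mathcal{L}}_{f,{\delta}}(x_0)$. First I would plug $\delta(x_k) = v(x_k) - x_k$ into the modified directional smoothness inequality~\eqref{eq:modified_directionnal_smoothness} and use convexity to bound the Frank-Wolfe gap from below by the primal gap: writing $g_k \defas \langle -\nabla f(x_k),\, v(x_k) - x_k\rangle \geq f(x_k) - f^\star$. Denoting $r_k \defas f(x_k) - f^\star$ and $r_0 \defas f(x_0) - f^\star$, the inequality with step $h$ becomes
\[
    r_{k+1} \leq r_k - h\, g_k + \frac{\tilde{\mathcal{L}}_{f,{\delta}}(x_0)\, h^2}{2}\, g_k \sqrt{\frac{r_0}{r_k}}.
\]
Minimizing the right-hand side over $h \in (0,1)$ gives exactly the stated $h_{\text{opt}}$, and substituting it back yields a recursion of the form $r_{k+1} \leq r_k - c\, g_k \min\{1, \tilde{\mathcal{L}}_{f,{\delta}}^{-1}\sqrt{r_k/r_0}\}$ for an absolute constant $c$; using $g_k \geq r_k$ this becomes a pure recursion in $r_k$.

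Next I would split into the two branches of the $\min$. In the regime where the unconstrained minimizer would dominate (i.e.\ $h_{\text{opt}} = 1$), the contraction is geometric with factor $1/2$, so $r_{k+1} \leq r_k/2$; this only happens while $r_k$ is large, and it brings us quickly into the other regime. In the regime $h_{\text{opt}} = \tilde{\mathcal{L}}_{f,{\delta}}^{-1}\sqrt{r_k/r_0} < 1$, the recursion reads roughly
\[
    r_{k+1} \leq r_k - \frac{1}{2\tilde{\mathcal{L}}_{f,{\delta}}(x_0)} \cdot \frac{r_k^{3/2}}{\sqrt{r_0}}.
\]
This is the classical ``$u_{k+1} \le u_k - a\, u_k^{3/2}$'' recursion whose solutions decay like $1/k^2$. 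I would handle it either by the standard substitution $u_k = 1/\sqrt{r_k}$ (showing $u_{k+1} \geq u_k + \text{const}$, hence $u_k \gtrsim k$, hence $r_k \lesssim 1/k^2$) or by directly verifying by induction that $r_k \leq \frac{4 r_0 \max\{1, 18\tilde{\mathcal{L}}_{f,{\delta}}^2(x_0)\}}{(k+2)^2}$. The induction step is the place where the precise constants $4$, $18$, and the shift $k+2$ get pinned down: one assumes the bound at step $k$, feeds it into the recursion, and checks the resulting quantity is $\leq$ the bound at step $k+1$, using $(k+2)^2/(k+3)^2 \geq 1 - 2/(k+3)$ type estimates; the $\max\{1,\ldots\}$ is needed precisely to make the base case and the geometric-phase transition consistent with the $1/k^2$ phase.

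The main obstacle I expect is bookkeeping the transition between the two regimes cleanly while keeping a single clean closed-form bound valid for all $k \ge 0$: one must ensure the geometric $1/2$-contraction phase does not ``cost'' more than the $1/k^2$ envelope allows, and that the induction hypothesis survives a step that might switch regimes. The $\max\{1, 18\tilde{\mathcal{L}}_{f,{\delta}}^2(x_0)\}$ factor is exactly the device that absorbs this; getting the induction to close with these particular constants (rather than, say, a worse constant) is the only genuinely delicate computation, and I would mirror the corresponding lemma in \citep{garber2015faster} to fix them. The affine invariance of the final rate is then immediate since $\tilde{\mathcal{L}}_{f,{\delta}}(x_0)$ is affine invariant for the FW direction (as in Proposition~\ref{prop:affine_invariance_directional_smoothness}) and $f(x_0)-f^\star$ is unchanged under reparametrization. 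For the exact line-search variant, I would note it performs at least as well as the explicit step $h_{\text{opt}}$ at every iteration, so the same bound holds verbatim.
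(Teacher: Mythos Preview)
Your proposal is correct and follows essentially the same approach as the paper: set up the one-step decrease from modified directional smoothness, split into the two regimes of $h_{\text{opt}}$ to obtain the combined recurrence $r_{k+1}\le r_k\cdot\max\{\tfrac12,\,1-M\sqrt{r_k}\}$ with $M=\tfrac{1}{2\tilde{\mathcal{L}}_{f,\delta}(x_0)\sqrt{r_0}}$, and then invoke the induction from \citep{garber2015faster} (with base case $C\ge 4r_0$ and induction step requiring $C\ge 18/M^2$) to get the stated bound; the exact line-search and affine-invariance remarks are handled exactly as you describe.
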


\begin{proof}
The proof is similar to that of Theorem \ref{thm:affine_invariant_linear_convergence}. We hence start with the modified directional smoothness assumption on $f$. For $ 0<h<1$,
    \begin{align} \label{eq:ModDirSmoothDecrease}
        f\big(x_{k+1}\big) \leq & f(x_k) + \left(h- \frac{\tilde{\mathcal{L}}_{f,{\delta}} h^2}{2} \sqrt{\frac{f(x_0) - f^*}{f(x_k) - f^*}}\right)\langle \nabla f(x_k),\, \delta(x_k) \rangle
    \end{align}
    After minimizing over $h$, we have two possibilities. The case with exact line-search follows immediately after these two cases. In the following, we use the notation $h_k \defas f(x_k) - f^*$ for the primal suboptimality at $x_k$, and $g_k \defas  \langle -\nabla f(x_k),\, \delta(x_k) \rangle$ for the Frank-Wolfe gap at $x_k$ (and note that $g_k \geq h_k$ by convexity).
    
    \textbf{Case 1:} $h_{\text{opt}} = \frac{1}{\tilde{\mathcal{L}}_{f,{\delta}}(x_0)} \sqrt{\frac{f(x_k) - f^*}{f(x_0) - f^*}}$.
    In such case, we obtain (subtract $f^*$ on both sides of the inequality)
    \[
        h_{k+1} \leq h_k - \frac{1}{2\tilde{\mathcal{L}}_{f,{\delta}}}  \sqrt{\frac{h_k}{h_0}} g_k,
    \]
    and since the Frank-Wolfe gap $g_k$ upper bounds the primal suboptimality, we obtain
    \[
    h_{k+1} \leq h_k \Big[ 1 - \frac{1}{2 \tilde{\mathcal{L}}_{f,{\delta}} \sqrt{h_0}} \sqrt{h_k}\Big].
    \]
    
    \textbf{Case 2:} With $h_{\text{opt}} = 1$, we have
    \[
    h_{k+1} \leq  h_k + \left(1- \frac{\mathcal{L}_{f,{\delta}}}{2} \sqrt{\frac{h_0}{h_k}}\right) g_k.
    \]
    In that case, we have that $\frac{1}{\tilde{\mathcal{L}}_{f,{\delta}}(x_0)} \sqrt{\frac{h_k}{h_0}}\geq 1$. Hence we obtain
    \[
         \textstyle h_{k+1} \leq h_k - \frac{1}{2} g_k \leq \frac{1}{2} h_k
    \]

Finally, we have the following recursive relation on the sequence of primal suboptimality $(h_k)$:
\begin{align} 
h_{k+1} &\leq h_k \cdot \max \Big\{ \frac{1}{2}, \, 1 - \frac{1}{2 \tilde{\mathcal{L}}_{f,{\delta}}\sqrt{h_0}} \sqrt{h_k}\Big\} \notag \\
&= h_k \cdot \max\Big\{ \frac{1}{2}, \, 1 - M \sqrt{h_k}\Big\}, \label{eq:GarberRecurrence}
\end{align}
with $M \defas \frac{1}{2 \tilde{\mathcal{L}}_{f,{\delta}}(x_0)\sqrt{h_0}}$. The inequality~\eqref{eq:GarberRecurrence} is exactly the same recurrence that was analyzed by~\citet{garber2015faster} (see their Equation~(7), with the same notation for $M$), where they have shown a $\mathcal{O}(1/K^2)$ convergence rate.
The exact constant is obtained by following the very same proof as \citep{garber2015faster}, \textit{i.e.} proving by induction that there exists $C$ such that $h_k\leq C/(k+2)^2$. The base case $k=0$ can be trivially obtained by letting $C \geq 4 h_0$.\footnote{Note that~\citet{garber2015faster} use a different argument for the base case, bounding instead $h_1$ with $L \cdot \text{diam}(\mathcal{C})^2/2$, using the Lipschitz smoothness of $f$ (and this would become $C_f/2$ in its affine invariant formulation with $C_f$ as defined by~\citet{jaggi2013revisiting}). However, we believe that $h_0$ is usually smaller than $C_f$ in applications, and in any case $h_0$ appears from $1/M^2$ for us, so using our different base case argument is more meaningful.}
Their induction step was shown by requiring that $C\geq \frac{18}{M^2}$. Thus using $C = \max\{4 h_0, \frac{18}{M^2}\}$ (and re-arranging) proves the statement of our theorem.
\end{proof}

The following lemma will be used in the proof of the bound on the modified directional smoothness.

\begin{lemma}\label{lem:strong_convexity_f_quadratic_growth}
Consider a compact convex set $\mathcal{C}$. Assume $f$ is a $\mu_{\omega}$-strongly convex function with respect to $\omega$. Let $x^*$ be the minimum of $f$ on $\mathcal{C}$. Then, for any $x\in\mathcal{C}$, we have
\begin{equation}\label{eq:consequence_strong_convexity}
\omega_{*}(\nabla f(x)) \geq \sqrt{\frac{\mu_{\omega}}{2}} \sqrt{f(x) - f(x^*)}.
\end{equation}
\end{lemma}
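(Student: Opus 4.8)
\textbf{Proof plan for Lemma~\ref{lem:strong_convexity_f_quadratic_growth}.}
The plan is to exploit the same duality trick already used in the proof of Proposition~\ref{prop:dual_smooth}, but now with the strong convexity inequality \eqref{eq:minkowsky_strong_convex_fun} playing the role that smoothness played there. Concretely, fix $x\in\mathcal{C}$ and apply \eqref{eq:minkowsky_strong_convex_fun} with the roles reversed, i.e.\ lower-bound $f(x^*)$ by its model at $x$:
\[
    f(x^*) \geq f(x) + \langle \nabla f(x),\, x^*-x\rangle + \frac{\mu_\omega}{2}\omega^2(x^*-x).
\]
Rearranging, $f(x)-f(x^*) \leq \langle \nabla f(x),\, x-x^*\rangle - \frac{\mu_\omega}{2}\omega^2(x^*-x)$. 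The right-hand side is then upper-bounded by maximizing over all vectors of a given $\omega$-length: writing $x-x^* = \beta u$ with $\omega(u)\le 1$ and $\beta\ge 0$ (taking $\beta = \omega(x-x^*)$ after normalization, and handling the mild asymmetry between $\omega(x-x^*)$ and $\omega(x^*-x)$ — this is where the only real subtlety lies), we get
\[
    f(x)-f(x^*) \leq \beta\,\omega_*(\nabla f(x)) - \frac{\mu_\omega}{2}\beta^2,
\]
using the definition \eqref{eq:dual_dist} of the dual norm $\omega_*$.

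First I would then maximize the right-hand side over $\beta\ge 0$: the unconstrained maximizer is $\beta = \omega_*(\nabla f(x))/\mu_\omega$, which is nonnegative, giving
\[
    f(x)-f(x^*) \leq \frac{\omega_*^2(\nabla f(x))}{2\mu_\omega}.
\]
Multiplying by $2\mu_\omega$ and taking square roots yields exactly \eqref{eq:consequence_strong_convexity}. Alternatively — and more cleanly — one can avoid optimizing in $\beta$ by simply noting that the inequality $f(x)-f(x^*)\le \beta\,\omega_*(\nabla f(x)) - \frac{\mu_\omega}{2}\beta^2$ holds for the specific choice $\beta=\omega(x-x^*)$, and that it rearranges into a quadratic inequality in $\omega_*(\nabla f(x))$, or one can instead start from the symmetric inequality obtained by adding \eqref{eq:minkowsky_strong_convex_fun} at $x$ and $x^*$ in both directions, which directly gives $\langle\nabla f(x)-\nabla f(x^*),\,x-x^*\rangle\ge \mu_\omega\,\cdot(\text{symmetrized }\omega^2)$ and then uses $\nabla f(x^*)=0$ (first-order optimality of $x^*$ on $\mathcal{C}$ only gives $\langle\nabla f(x^*),x-x^*\rangle\ge 0$, which actually suffices since it appears with the right sign).

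The step I expect to be the main obstacle is the bookkeeping around the \emph{asymmetry} of $\omega$: since $\omega(x-x^*)$ and $\omega(x^*-x)$ need not coincide, one must decide which appears in the strong-convexity term and which is controlled by $\omega_*$ via Cauchy–Schwarz, and possibly pay a factor of $\kappa_\omega$ (the bounded-asymmetry constant from Assumption~\ref{assum:distance_fun}). Reading the statement, however, no $\kappa_\omega$ appears in \eqref{eq:consequence_strong_convexity}, which suggests the intended argument sidesteps this: using first-order optimality $\langle \nabla f(x^*),\, x-x^*\rangle \ge 0$ and the strong convexity inequality $f(x^*)\ge f(x)+\langle\nabla f(x),x^*-x\rangle+\frac{\mu_\omega}{2}\omega^2(x^*-x)$, one bounds $\langle\nabla f(x),x-x^*\rangle\le \omega(x-x^*)\,\omega_*(\nabla f(x))$ and keeps $\omega(x^*-x)$ in the quadratic term; if the proof instead intends $\omega(x-x^*)$ in both places the factor $\kappa_\omega$ would surface, so I would double-check against the paper's conventions and, if needed, state the bound as written by choosing the direction of the difference vector that makes the dual-norm bound tight. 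Modulo that, the whole argument is two lines of convexity plus one scalar optimization.
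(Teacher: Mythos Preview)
Your main line --- strong convexity at $x$, Fenchel bound on the inner product, then optimize the resulting quadratic in $\beta$ --- is valid for symmetric $\omega$ and in fact yields the sharper constant $\sqrt{2\mu_\omega}$ rather than $\sqrt{\mu_\omega/2}$. But the asymmetry obstacle you flag is genuine and your proposed fixes do not remove it: applying strong convexity \emph{at $x$} forces the quadratic term to be $\tfrac{\mu_\omega}{2}\omega^2(x^*-x)$, while the Fenchel bound on $\langle\nabla f(x),x-x^*\rangle$ produces $\omega(x-x^*)\,\omega_*(\nabla f(x))$; if instead you write $\langle-\nabla f(x),x^*-x\rangle\le\omega(x^*-x)\,\omega_*(-\nabla f(x))$ to align the directions, you end up controlling $\omega_*(-\nabla f(x))$ rather than $\omega_*(\nabla f(x))$. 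Either route costs a factor $\kappa_\omega$, so for $\kappa_\omega>2$ your bound falls short of \eqref{eq:consequence_strong_convexity}.

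The paper takes a different, two-step route that avoids the mismatch entirely. First it applies strong convexity \emph{at $x^*$} (not at $x$): $f(x)\ge f(x^*)+\langle\nabla f(x^*),x-x^*\rangle+\tfrac{\mu_\omega}{2}\omega^2(x-x^*)$, and uses the first-order optimality condition $\langle\nabla f(x^*),x-x^*\rangle\ge 0$ (exactly the ingredient you mention in passing) to drop the linear term, giving the quadratic-growth bound $f(x)-f(x^*)\ge\tfrac{\mu_\omega}{2}\omega^2(x-x^*)$. Second, plain convexity plus Fenchel give $f(x)-f(x^*)\le\langle\nabla f(x),x-x^*\rangle\le\omega(x-x^*)\,\omega_*(\nabla f(x))$. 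Both inequalities now carry $\omega(x-x^*)$ with the \emph{same} orientation, so dividing the second by $\omega(x-x^*)$ and substituting the first yields \eqref{eq:consequence_strong_convexity} with no $\kappa_\omega$. The trade-off is that the paper's argument loses the factor-of-two improvement you would get in the symmetric case, but it is clean for arbitrary asymmetric $\omega$ --- which is precisely what the lemma claims.
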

\begin{proof}
Let $x\in\mathcal{C}$. From Definition \ref{def:smoothness_strong_convexity_general}, we have that
\[
f(x) \geq f(x^*) + \langle \nabla f(x^*), \, x - x^* \rangle + \frac{\mu_{\omega}}{2} \omega^2(x - x^\star).
\]
Hence with the optimality conditions, \textit{i.e.} $\langle \nabla f(x^*), \, x - x^* \rangle \geq 0$, we have
\begin{equation}\label{eq:intermediaire_strong_convexity}
f(x) - f(x^*) \geq \frac{\mu_{\omega}}{2} \omega^2(x - x^*).
\end{equation}
By convexity of $f$, we have $\langle x - x^*, \, \nabla f(x) \rangle \geq f(x) - f(x^*)$, and by definition of the Fenchel conjugate, we have
\[
\omega(x - x^*) \cdot \omega_{*}(\nabla f(x)) \geq \langle x - x^*, \, \nabla f(x) \rangle \geq f(x) - f(x^*).
\]
Hence by plugging \eqref{eq:intermediaire_strong_convexity}, we obtain \eqref{eq:consequence_strong_convexity}.
\end{proof}

We now prove Theorem \ref{thm:modified_directionnal_smoothness_bound} that is similar to Theorem \ref{thm:directionnal_smoothness_bound}.
It states that in the case of the FW algorithm, the modified directional smoothness constant is bounded if the function is smooth, strongly convex and the set is strongly convex for any distance function $\omega$. It also provides an explicit upper bound on the modified directional smoothness constant. This bound implies that the convergence rate in Theorem \ref{thm:affine_invariant_sublinear_convergence} is better conditioned than existing results \citep{garber2015faster}.

\begin{theorem}[Bounds on modified directional smoothness] \label{thm:modified_directionnal_smoothness_bound}
Consider $x_0\in\mathcal{C}$ and a function $f$, smooth w.r.t.\ the distance function $\omega$, with constant $L_\omega$, strongly convex w.r.t.\ the distance function $\omega$, with constant $\mu_\omega$, and the set $\mathcal{C}$, strongly convex with constant $\alpha_\omega$. Let $\delta(x) = x-v(x)$, $v(x)$ being the FW corner.
Then, the function $f(x)$ is modified directionally smooth w.r.t.\ to $\delta$, with constant
    \begin{equation} \label{eq:bound_ratio_modified_directional_smoothness}
        \tilde{\mathcal{L}}_{f,{\delta}}(x_0) \leq \frac{\kappa_\omega \sqrt{2} L_{\omega}}{\alpha_{\omega}\sqrt{\mu_{\omega}}} \frac{1}{\sqrt{f(x_0)-f^*}}.
    \end{equation}
\end{theorem}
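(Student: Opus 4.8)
The plan is to mirror the proof of Theorem~\ref{thm:directionnal_smoothness_bound}, replacing the crude lower bound $\omega_*(-\nabla f(x)) > c_\omega$ by the sharper, strong-convexity–driven lower bound from Lemma~\ref{lem:strong_convexity_f_quadratic_growth}. Concretely, I would start from $\omega$-smoothness of $f$ applied along the segment from $x$ to $x + h\delta(x)$ with $\delta(x) = x - v(x)$, for $0 \le h \le 1$:
\[
    f(x + h\delta(x)) \le f(x) + h\langle \nabla f(x),\,\delta(x)\rangle + \frac{h^2 L_\omega}{2}\omega^2(\delta(x)).
\]
Then I would invoke the distance scaling inequality (Lemma~\ref{lem:scaling_inequality}) in the form $\langle -\nabla f(x),\,\delta(x)\rangle \ge \alpha_\omega\,\omega_*(-\nabla f(x))\,\omega^2(\delta(x))$ to eliminate $\omega^2(\delta(x))$ in favour of the Frank--Wolfe gap, giving
\[
    f(x + h\delta(x)) \le f(x) + h\langle \nabla f(x),\,\delta(x)\rangle - \frac{h^2 L_\omega}{2}\,\frac{\langle \nabla f(x),\,\delta(x)\rangle}{\alpha_\omega\,\omega_*(-\nabla f(x))}.
\]
So far this is identical to the earlier proof; the novelty is purely in how one lower-bounds $\omega_*(-\nabla f(x))$.

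The key step is to apply Lemma~\ref{lem:strong_convexity_f_quadratic_growth}, which gives $\omega_*(-\nabla f(x)) = \omega_*(\nabla f(x)) \ge \sqrt{\mu_\omega/2}\,\sqrt{f(x) - f^*}$ for every $x \in \mathcal{C}$ (using $\omega_*(\nabla f(x)) = \omega_*(-\nabla f(x))$ since the bound there is really on the Fenchel conjugate of the gradient magnitude — one should be slightly careful here, as $\omega_*$ is not symmetric, but Lemma~\ref{lem:strong_convexity_f_quadratic_growth} is stated for $\nabla f(x)$, and in the scaling inequality it is $\omega_*(-\nabla f(x))$ that appears; resolving this sign/asymmetry mismatch, possibly by re-deriving the lemma with $-\nabla f(x)$ or by absorbing a factor of $\kappa_\omega$, is the one genuine subtlety). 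Substituting this lower bound into the denominator yields
\[
    f(x + h\delta(x)) \le f(x) + h\langle \nabla f(x),\,\delta(x)\rangle - \frac{h^2 L_\omega}{2\alpha_\omega\sqrt{\mu_\omega/2}}\,\frac{\langle \nabla f(x),\,\delta(x)\rangle}{\sqrt{f(x) - f^*}}.
\]

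Finally I would multiply and divide the last term by $\sqrt{f(x_0) - f^*}$ to match the exact form of Definition~\ref{def:modified_directionnal_smoothness}, reading off
\[
    \tilde{\mathcal{L}}_{f,\delta}(x_0) \le \frac{L_\omega}{\alpha_\omega\sqrt{\mu_\omega/2}}\cdot\frac{1}{\sqrt{f(x_0) - f^*}} = \frac{\sqrt{2}\,L_\omega}{\alpha_\omega\sqrt{\mu_\omega}}\cdot\frac{1}{\sqrt{f(x_0) - f^*}},
\]
and the stated bound with the extra factor $\kappa_\omega$ emerges precisely from handling the asymmetry of $\omega_*$ noted above (i.e., passing between $\omega_*(\nabla f(x))$ and $\omega_*(-\nabla f(x))$ costs a factor $\kappa_\omega$). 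The main obstacle is therefore not the algebra but getting this asymmetry bookkeeping right; everything else is a direct transcription of the Theorem~\ref{thm:directionnal_smoothness_bound} argument with the quadratic-growth lower bound in place of the constant $c_\omega$.
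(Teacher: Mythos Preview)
Your proposal is correct and follows essentially the same route as the paper's proof: start from $\omega$-smoothness along the FW direction, replace $\omega^2(\delta(x))$ via the scaling inequality (Lemma~\ref{lem:scaling_inequality}), then lower-bound $\omega_*(-\nabla f(x))$ using Lemma~\ref{lem:strong_convexity_f_quadratic_growth} and multiply-and-divide by $\sqrt{f(x_0)-f^*}$. Your diagnosis of the one subtlety is exactly right: the paper resolves the sign mismatch by noting that $\omega_*$ inherits the same bounded-asymmetry constant $\kappa_\omega$ as $\omega$, so $\omega_*(-\nabla f(x)) \ge \kappa_\omega^{-1}\,\omega_*(\nabla f(x))$, which is precisely where the extra $\kappa_\omega$ in \eqref{eq:bound_ratio_modified_directional_smoothness} comes from.
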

\begin{proof}
Let $h\in[0,1]$. With the smoothness of $f$, we have
\[
f(x + h \delta(x)) \leq f(x) - h \langle -\nabla f(x), \, \delta(x) \rangle + \frac{h^2L_{\omega}}{2} \omega\big(\delta(x)\big)^2.
\]
Recall that when $\delta(x)$ is the Frank-Wolfe direction, we have that the Frank-Wolfe gap $g(x)$ is equal to $\langle -\nabla f(x), \, \delta(x)\rangle$. Also, the scaling inequality for strongly convex sets (Lemma \ref{lem:scaling_inequality}) implies that $\omega(\delta(x))^2 \leq g(x)/(\alpha_{\omega} \omega^\star(-\nabla f(x)))$, so that
\[
f(x + h \delta(x)) \leq f(x) - h \langle -\nabla f(x), \, \delta(x) \rangle + \frac{h^2L_{\omega}}{2 \alpha_{\omega} } \frac{g(x)}{\omega^\star(-\nabla f(x))}.
\]

Now, it is easy to see from the definition of the dual distance $\omega_*$ that is has the same bounded asymmetry constant as for $\omega$, and thus $\omega^\star(-\nabla f(x)) \geq \frac{1}{\kappa_\omega} \omega^\star(\nabla f(x))$.
Thus we apply~\eqref{eq:consequence_strong_convexity} to obtain:
\[
f(x + h \delta(x)) \leq f(x) - h g(x) + \frac{h^2}{2} \frac{\kappa_w \sqrt{2} L_{\omega}}{\alpha_{\omega}\sqrt{\mu_{\omega}}\sqrt{f(x_0)-f^*}} \frac{\sqrt{f(x_0)-f^*}}{\sqrt{f(x)-f^*}} g(x),
\]
which implies equation \eqref{eq:bound_ratio_modified_directional_smoothness}.
\end{proof}

Theorem \ref{thm:modified_directionnal_smoothness_bound} shows that the conditioning of convergence with the directional smoothness, which does not depend on any norm choice, in Theorem \ref{thm:affine_invariant_sublinear_convergence} is better than conditioning of other analysis \citep{garber2015faster}. We now prove that the optimal constant of modified directional smoothness $\tilde{L}_{f,\delta}$ is affine invariant, a result similar to Proposition \ref{prop:affine_invariance_directional_smoothness} for the directional smoothness constant.

\begin{proposition}[Affine Invariance of Modified Directional Smoothness]\label{prop:affine_invariance_modified_directional_smoothness}
Consider $\mathcal{C}$ a compact convex set and $f$ a convex function on $\mathcal{C}$ that is modified directionally smooth w.r.t.\ $\delta(x)$ with constant $\tilde{\mathcal{L}}_{f,\delta}(x_0)$ (with $x_0\in\mathcal{C}$).
If for any $x\in\mathcal{C}$, $\delta(x)$ is affine covariant (e.g.\  the Frank-Wolfe direction $\delta(x) \triangleq v(x)-x$), then the constant $\tilde{\mathcal{L}}_{f,{\delta}}$ in \eqref{eq:modified_directionnal_smoothness} is affine invariant. In other words, for an invertible matrix $B$, let 
\[
    \tilde f(\cdot) \triangleq f(B\cdot), \;\; \tilde \delta_{\tilde{\mathcal{C}}}(\cdot) \triangleq \delta_ {B^{-1}\cdot\mathcal{C}}(\cdot),
\]
then $\tilde{\mathcal{L}}_{\tilde f,\tilde \delta_{\tilde{\mathcal{C}}}}(x_0) = \tilde{\mathcal{L}}_{f,\delta}(y_0)$, where $y_0\triangleq B^{-1}x_0$.
\end{proposition}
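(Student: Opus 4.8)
The plan is to mirror the proof of Proposition~\ref{prop:affine_invariance_directional_smoothness} (given in Appendix~\ref{sec:affine_invariance_directional_smoothness}), adapting it to the extra square-root factor $\sqrt{(f(x_0)-f^*)/(f(x)-f^*)}$ that appears in Definition~\ref{def:modified_directionnal_smoothness}. First I would start from the inequality \eqref{eq:modified_directionnal_smoothness} defining modified directional smoothness for $f$ at the point $x=By$ with $y\in\tilde{\mathcal{C}}=B^{-1}\mathcal{C}$, i.e.
\[
    f\big(By + h\,\delta(By)\big) \leq f(By) + \Big(h - \tfrac{\tilde{\mathcal{L}}_{f,\delta}(x_0)h^2}{2}\sqrt{\tfrac{f(x_0)-f^*}{f(By)-f^*}}\Big)\langle \nabla f(By),\,\delta(By)\rangle .
\]

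Next I would use the two affine-covariance facts already established. Since $\delta$ is affine covariant, $\delta(By) = B\,\tilde\delta_{\tilde{\mathcal{C}}}(y)$, so $By + h\delta(By) = B\big(y + h\tilde\delta_{\tilde{\mathcal{C}}}(y)\big)$ and thus $f(By+h\delta(By)) = \tilde f\big(y + h\tilde\delta_{\tilde{\mathcal{C}}}(y)\big)$ and $f(By)=\tilde f(y)$. Also $\nabla \tilde f(y) = B^\top \nabla f(By)$, so $\langle \nabla f(By),\delta(By)\rangle = \langle B^\top\nabla f(By),\,\tilde\delta_{\tilde{\mathcal{C}}}(y)\rangle = \langle \nabla \tilde f(y),\,\tilde\delta_{\tilde{\mathcal{C}}}(y)\rangle$. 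The one new ingredient compared to the directional-smoothness proof is the suboptimality ratio: writing $\tilde f^* \defas \min_{\tilde{\mathcal{C}}}\tilde f = f^*$ (the minimum value is unchanged under reparametrization) and $\tilde y_0 \defas y_0 = B^{-1}x_0$, we get $f(By)-f^* = \tilde f(y) - \tilde f^*$ and $f(x_0)-f^* = \tilde f(y_0) - \tilde f^*$, so the square-root factor is itself affine invariant, equal to $\sqrt{(\tilde f(y_0)-\tilde f^*)/(\tilde f(y)-\tilde f^*)}$.

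Substituting all of these identities into the displayed inequality yields exactly
\[
    \tilde f\big(y + h\,\tilde\delta_{\tilde{\mathcal{C}}}(y)\big) \leq \tilde f(y) + \Big(h - \tfrac{\tilde{\mathcal{L}}_{f,\delta}(y_0)h^2}{2}\sqrt{\tfrac{\tilde f(y_0)-\tilde f^*}{\tilde f(y)-\tilde f^*}}\Big)\langle \nabla \tilde f(y),\,\tilde\delta_{\tilde{\mathcal{C}}}(y)\rangle ,
\]
which is precisely the statement that $\tilde f$ is modified directionally smooth w.r.t.\ $\tilde\delta_{\tilde{\mathcal{C}}}$ with constant $\tilde{\mathcal{L}}_{f,\delta}(y_0)$ (valid for all $y\in\tilde{\mathcal{C}}$ and $0<h<1$, since $x=By$ ranges over all of $\mathcal{C}$ as $y$ ranges over $\tilde{\mathcal{C}}$). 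Hence $\tilde{\mathcal{L}}_{\tilde f,\tilde\delta_{\tilde{\mathcal{C}}}}(x_0) \leq \tilde{\mathcal{L}}_{f,\delta}(y_0)$; applying the same argument to the inverse transformation $B^{-1}$ gives the reverse inequality, so they are equal.

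There is no real obstacle here — the computation is essentially routine once one observes that the minimum value $f^*$ and hence the suboptimality ratio are manifestly preserved under the reparametrization. The only point requiring a sentence of care is bookkeeping of the base point: the constant on the transformed side is indexed by $y_0 = B^{-1}x_0$, not by $x_0$, which is why the proposition is stated as $\tilde{\mathcal{L}}_{\tilde f,\tilde\delta_{\tilde{\mathcal{C}}}}(x_0) = \tilde{\mathcal{L}}_{f,\delta}(y_0)$ rather than with a single base point; I would make sure the correspondence $x=By \leftrightarrow y$, $x_0 \leftrightarrow y_0$ is used consistently throughout.
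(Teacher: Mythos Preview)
Your proposal is correct and follows essentially the same approach as the paper's own proof: start from the modified directional smoothness inequality for $f$ at $x=By$, use $\delta(By)=B\,\tilde\delta_{\tilde{\mathcal C}}(y)$ and $\nabla\tilde f(y)=B^\top\nabla f(By)$ to rewrite the inner products, observe $\tilde f^\ast=f^\ast$ so the suboptimality ratio is unchanged, deduce one inequality between the constants, and then invoke the inverse transformation for the reverse inequality. The only blemish is a bookkeeping slip in your second displayed inequality (the constant there should still be $\tilde{\mathcal L}_{f,\delta}(x_0)$, the one you started with, which then yields $\tilde{\mathcal L}_{\tilde f,\tilde\delta_{\tilde{\mathcal C}}}(y_0)\le\tilde{\mathcal L}_{f,\delta}(x_0)$); you already flag this base-point issue in your final paragraph, and the paper's own statement and proof suffer from the same notational inconsistency.
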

\begin{proof}
Let $y\in B^{-1}\cdot \mathcal{C}$. Applying the definition of directional smoothness for $f$ at $B y$, we obtain
\begin{equation}\label{eq:directional_smoothness_B}
f\big(By + h\delta(By)\big) \leq  f(By) + h\langle \nabla f(By),\, \delta(By) \rangle - \frac{\tilde{\mathcal{L}}_{f,\delta}(x_0) h^2}{2}
    \langle \nabla f(By),\, \delta(By) \rangle
    \sqrt{\frac{f(x_0) - f^*}{f(By) - f^*}}.
\end{equation}
Similarly to Proposition \ref{prop:affine_invariance_directional_smoothness}, we have that $\nabla \tilde{f}(y) = B^T \nabla f(By)$ and $\delta(By) = B \tilde{\delta}_{\tilde{\mathcal{C}}}(y)$ so that
\[
\langle \nabla f(By),\, \delta(By) \rangle =  \langle \nabla f(By),\, B \tilde{\delta}_{\tilde{\mathcal{C}}}(y) \rangle
=  \langle B^T\nabla f(By),\, \tilde{\delta}_{\tilde{\mathcal{C}}}(y) \rangle = \langle \nabla \tilde{f}(y), \, \tilde{\delta}_{\tilde{\mathcal{C}}}(y)\rangle. 
\]
Hence \eqref{eq:directional_smoothness_B} and $\tilde{f}^*=f^*$, implies that for any $y\in B^{-1}\cdot\mathcal{C}$
\[
\tilde{f}(y + h \tilde{\delta}_{\tilde{\mathcal{C}}}) \leq \tilde{f}(y) + h \langle \nabla \tilde{f}(y), \, \tilde{\delta}_{\tilde{\mathcal{C}}}(y)\rangle - \frac{\tilde{\mathcal{L}}_{f,\delta}(x_0) h^2}{2}
    \langle \nabla \tilde{f}(y), \, \tilde{\delta}_{\tilde{\mathcal{C}}}(y)\rangle
    \sqrt{\frac{\tilde{f}(y_0) - \tilde{f}^*}{\tilde{f}(y) - \tilde{f}^*}}.
\]
Hence, $\tilde{f}$ is modified directionally smooth on $\tilde{\mathcal{C}}\triangleq B^{-1}\cdot\mathcal{C}$ with respect to $\tilde{\delta}_{\tilde{\mathcal{C}}}$ and
$\tilde{L}_{\tilde{f},\tilde{\delta}_{\tilde{\mathcal{C}}}}(y_0) \leq \tilde{\mathcal{L}}_{f, \delta}(x_0)$.
A similar reasoning concludes that the two constants are equal.
\end{proof}

\section{Related Work Details}

\cite{lacoste2013affine} propose an affine invariant analysis of the vanilla Frank-Wolfe algorithm when the unconstrained optimum $x^*$ is in the relative interior of the constraint set $\mathcal{C}$ and $f$ is strongly convex. Hence, the analysis applies when the constraint set is a strongly convex set, and the quantity might be defined in our context. However, the affine invariant constant $\mu_f^{(FW)}$ standing for the strong convexity of $f$ is zero whenever the optimum is not in the relative interior of the constraint set $\mathcal{C}$.
Indeed, Equation~(3) from~\citep{lacoste2013affine} define the following affine invariant quantity
\[
\mu_f^{(FW)} \triangleq \underset{\substack{x\in\mathcal{C}\setminus\{x^*\}, \gamma\in]0,1]\\ \bar{s} = \bar{s}(x, x^*, \mathcal{C})\\ y = x + \gamma (\bar{s} - x)}}{\text{inf }} \frac{2}{\gamma^2} \big[f(y) - f(x) - \langle \nabla f(x),\, y-x \rangle \big],
\]
where $\bar{s}(x, x^*, \mathcal{C}) = \text{ray}(x, x^*)\cap \partial\mathcal{C}$. When $x^*\notin\mathcal{C}$, we have $\mu_f^{(FW)} \leq 0$ since there are some point $x\in\partial\mathcal{C}$ such that $x \in \bar{s}(x, x^*, \mathcal{C})$, and thus we can take $\bar{s}=x$ in the $\inf$, yielding $y=x$ with $\gamma>0$. This means that the above quantity cannot be easily generalized to the setting we studied in Theorem~\ref{thm:directionnal_smoothness_bound} where the unconstrained optimum is assumed to be \emph{outside} of $\mathcal{C}$.
\end{document}